\newcommand{\1}{\mathds{1}}
\newcommand{\diff}[3][]{\frac{d^{#1} #2}{d #3^{#1}}}
\newcommand{\comment}[1]{}
\numberwithin{equation}{section}
\def\bb{\begin{equation}}
\def\ee{\end{equation}}
\def\bse{\begin{subequations}}
\def\ese{\end{subequations}}
\newcommand{\F}{\mathcal{F}}
\DeclareMathOperator{\sign}{sgn}
\DeclareMathOperator{\supp}{supp}
\def\R{\mathbb{R}}
\def\Pr{\mathbb{P}}
\def\E{\mathbb{E}}
\def\supp{\text{supp}}
\def\pa{\partial}
\newtheorem{defin}{Definition}[section]
\newtheorem{lemma}[defin]{Lemma}
\newtheorem{prop}[defin]{Proposition}
\newtheorem{theorem}[defin]{Theorem}
\newtheorem{corr}[defin]{Corollary}
\newtheorem{claim}[defin]{Claim}
\newtheorem{assu}[defin]{Assumption}
\newtheorem{rema}[defin]{Remark}
\newtheorem{conj}[defin]{Conjecture}
\title[The Inverse First Passage Time Problem]{The Inverse First Passage Time Problem for killed Brownian motion}
\author[B. Ettinger]{Boris Ettinger}
\email{boris.ettinger@gmail.com}
\author[A. Hening]{Alexandru Hening }
\address{Department of Mathematics\\
Tufts University\\
Bromfield-Pearson Hall\\
503 Boston Avenue\\
Medford, MA 02155\\
United States
}
\email{Alexandru.Hening@tufts.edu}
 \author[T. K. Wong]{Tak Kwong Wong}
\address{Department of Mathematics \\
 The University of Hong Kong\\
 Pokfulam\\
 Hong Kong}
 \email{takkwong@maths.hku.hk}
\subjclass[2010]{35K58, 60J70, 91G40, 91G80}
\keywords{Inverse first passage problem, parabolic partial differential equations, Brownian motion, Feynman-Kac formula, killed diffusion, discontinuous killing}
\begin{document}

\begin{abstract}
The classical \textit{inverse first passage time problem} asks whether, for a Brownian motion $(B_t)_{t\geq 0}$ and a positive random variable $\xi$, there exists a barrier  $b:\mathbb{R}_+\to\mathbb{R}$ such that $\Pr\{B_s>b(s), 0\leq s \leq t\}=\Pr\{\xi>t\}$, for all $t\geq 0$. We study a variant of the inverse first passage time problem for killed Brownian motion. We show that if $\lambda>0$ is a killing rate parameter and $\1_{(-\infty,0]}$ is the indicator of the set $(-\infty,0]$ then, under certain compatibility assumptions, there exists a unique continuous function $b:\mathbb{R}_+\to\mathbb{R}$ such that $\E\left[-\lambda \int_0^t \1_{(-\infty,0]}(B_s-b(s))\,ds\right] = \Pr\{\zeta>t\}$ holds for all $t\geq 0$. This is a significant improvement of a result of the first two authors (Annals of Applied Probability 24(1):1--33, 2014).

The main difficulty arises because $\1_{(-\infty,0]}$ is discontinuous. We associate a semi-linear parabolic partial differential equation (PDE) coupled with an integral constraint to this version of the inverse first passage time problem. We prove the existence and uniqueness of weak solutions to this constrained PDE system. In addition, we use the recent Feynman-Kac representation results of Glau (Finance and Stochastics 20(4):1021--1059, 2016) to prove that the weak solutions give the correct probabilistic interpretation.
\end{abstract}
\maketitle
\tableofcontents

\section{Introduction}\label{s:Introduction}

Suppose  $(B_t)_{t\geq 0}$ is a standard Brownian motion on a probability space $(\Omega,\F,\{\F_t\}_{t\geq0},\Pr)$ with a filtration $\{\F_t\}_{t\geq 0}$ satisfying the usual conditions. For any Borel measurable function $b:\R_+\rightarrow \R$, we define the stopping time
\bb\label{e:hat_tau}
\hat \tau:= \inf\left\{t>0: B_t\leq b(t)\right\}.
\ee
This is the first time the Brownian motion $(B_t)_{t\geq 0}$ goes below the barrier $b$. There are two interesting classical problems involving first passage times.
\begin{itemize}
\item The classical \textit{first passage time problem (FPT)}: For a given function $b:\R_+\to\R$, find the survival distribution of the first time that $(B_t)_{t\geq 0}$ crosses $b$. In other words, find
\[
\Pr\{\hat \tau>t\},\quad\mbox{for all } t\geq 0.
\]
\item The classical \textit{inverse first passage time problem (IFPT)}: For a given survival function $G:[0,\infty)\to[0,\infty)$, does there exist a Borel measurable function $b:\R_+\rightarrow \R$ such that
\[
G(t) = \Pr\{\hat \tau>t\}
\]
for all $t\geq 0$?
\end{itemize}
First passage times of Markov processes through constant or time-dependent barriers have been used extensively to model phenomena in finance (\cite{HW00, HW01, AZ01}), neuroscience (\cite{BSZ13}), meteorology,  engineering, and biology -- see \cite{RS90, EEH14, ZS09, DP15} for further references.

There have been numerous papers looking at the first passage time problem (\cite{K52, DS53, P02, W07}). Due to applications in mathematical finance, the inverse first passage time problem has seen increased interest in recent years. The aim of this paper is to solve the first passage and inverse first passage time problems for killed diffusions.

The classical inverse first passage time problem has originally been posed by A. Shiryaev in 1976 for the special case when the distribution of the first passage time is exponential. A first step towards solving the problem was taken in \cite{A81} where the author showed that there exists a stopping time with the given distribution. Nevertheless, this stopping time is not the first hitting time of a barrier $b$ by a Brownian motion $(B_t)_{t\geq 0}$.

A large class of first passage time problems can be analyzed using a partial differential equations (PDE) framework. Let $w(t,x):= \frac{\partial}{\partial x}\Pr\{B_t\leq x, \hat \tau>t\} $ be the sub-probability density of $(B_t)_{t\geq 0}$ killed at $\hat \tau$. Using the Kolmogorov forward equation, one can see that $w$ satisfies
\begin{equation} \label{e_Kolm_FPT}
\left\lbrace
 	\begin{aligned}
		\partial_t w(t,x) &= \frac{1}{2} \partial_x^2 w(t,x) - \partial_x w(t,x), \quad x>b(t),\, t>0, \\
        w(t,x) &=0, \quad x\leq b(t),\, t>0,\\
        w(0,x)&= f(x), \quad x \in \R,\\
	\end{aligned}
 \right.
\end{equation}
where the function $f$ is the probability density of $B_0$. When the function $b$ is smooth enough, \eqref{e_Kolm_FPT} has a unique smooth solution, and we can express the survival probability as
\[
G(t):= \Pr\{\hat \tau>t\} = \int_{b(t)}^\infty w(t,x)\,dx,\quad\mbox{for all }t\geq 0.
\]
An important step towards solving the IFPT was taken in \cite{AZ01} where the authors show that for sufficiently smooth barriers $b$ and survival probabilities $G$, the density $w$ and the barrier $b$ are a solution to the following free boundary problem
\begin{equation} \label{e_Kolm_IFPT}
\left\lbrace
 	\begin{aligned}
		\partial_t w(t,x) &= \frac{1}{2}\partial_x^2 w(t,x) -  \partial_x w(t,x), \quad x>b(t),\, t>0, \\
        w(t,x) &=0, \quad x\leq b(t),\, t>0,\\
        w(0,x)&= f(x), \quad x \in \R,\\
        G(t) &= \int_{b(t)}^{\infty} w(t,x)\,dx, \quad t\geq 0.
	\end{aligned}
 \right.
\end{equation}
The existence and uniqueness of viscosity solution of \eqref{e_Kolm_IFPT} was
established in \cite{CCCS06}. In the follow-up paper \cite{CCCS11} the authors show that, when $G$ is continuous, the solution $b$ of \eqref{e_Kolm_IFPT} gives the correct probabilistic interpretation and therefore solves the IFPT. The recent paper \cite{EJ16} provides a different proof for the classical IFPT problem by using an elegant connection to optimal transport. We were inspired to study these problems after reading the preprint \cite{DP10} which describes how first passage times can be used to quantify the credit risk of certain financial transactions.

Let $U$ be an exponentially distributed random variable with mean one that is independent of $(B_t)$, and let  $\1_{(-\infty,0]}$ be the indicator function of the set $(-\infty,0]$.  Assume $0\leq \psi\leq 1$ is a suitably smooth approximation of $\1_{(-\infty,0]}$ that is non-increasing with $\lim_{x\to-\infty}\psi(x)=1$ and $\lim_{x\to\infty}\psi(x)=0$. We can define the random times
\bb\label{e_tau}
 \tau:=\inf\left\{t>0: \lambda \int_0^t \1_{(-\infty,0]}(B_s-b(s))\,ds>U\right\}
\ee
and
\bb\label{e_tilde_tau}
\tilde\tau := \inf\left\{t>0: \lambda \int_0^t \psi (B_s-b(s))\,ds>U\right\},
\ee
where $\lambda>0$ is a fixed rate parameter.
The integral  $\int_0^t \1_{(-\infty,0]} (B_s-b(s))\,ds$ is the time spent by the Brownian motion $(B_t)_{t\geq 0}$, during the period $[0,t]$, under the barrier $b$. Similarly, $ \int_0^t \psi (B_s-b(s))\,ds$ is an approximation of this time.

The random time $\tau$ (and $\tilde \tau$) is a ``smoothed-out'' version of the stopping time $\hat \tau$ from \eqref{e:hat_tau}. Instead of killing $(B)_{t\geq 0}$ as soon as it hits the barrier $b$, we kill $(B)_{t\geq 0}$ at rate $\lambda$ if it is in a state $y$ (and $y\leq b(t)$) at time $t\geq 0$. Note that if we let $\lambda\to\infty$ in \eqref{e_tau} or \eqref{e_tilde_tau} we recover the time $\hat \tau$.
\begin{rema}
We have the following possible financial interpretation if one assumes $(B_t)_{t\geq 0}$ models the credit index of a company. When $B_t$ is large, corresponding to a time $t$ when the counterparty is in sound financial health, the killing rate $\lambda\1_{(-\infty,0]}(B(t)-b(t))$ is 0 and default in an ensuing short period of time is unlikely, whereas the killing rate is equal to its maximum possible value, $\lambda$, when $B_t$ is low and default is more probable.
\end{rema}
It is straightforward to check that since $U$ is an exponentially distributed random variable with mean $1$ that is independent of $(B_t)$, we have
\bb\label{e_survival_tau}
\Pr\{\tau>t\} = \E \left[\exp\left(-\lambda \int_0^t \1_{(-\infty,0]}(B_s-b(s))\,ds\right)\right],\quad\mbox{for all } t\geq 0.
\ee
\begin{assu}\label{a:1}
We suppose for the remainder of the paper that the Brownian motion $(B_t)_{t\geq 0}$ has a random starting position $B_0$. Furthermore, we suppose that $B_0$ has a density $f\in L^2(\R)$ that is supported on $\R$, i.e., $f(x)>0$ for all $x\in \R$.
\end{assu}
As a result of Assumption \ref{a:1}, equation \eqref{e_survival_tau} becomes
\bb\label{e_survival_tau_2}
\Pr\{\tau>t\} = \int_\R \E \left[\exp\left(-\lambda \int_0^t \1_{(-\infty,0]}(x+B_s-b(s))\,ds\right)\right] f(x)\,dx,\quad\mbox{for all } t\geq 0.
\ee
\begin{rema}
From now on we will assume without loss of generality that $\lambda=1$.
\end{rema}
In \cite{EEH14} the IFPT for the random time $\tilde \tau$ defined in \eqref{e_tilde_tau} was analyzed thoroughly. We note that $\tilde \tau$ is an approximation of the more natural choice of stopping time $\tau$.
It was shown in \cite[Theorem 2.1]{EEH14}
that if $(B_t)_{t\geq 0}$ is a Brownian motion with a given suitable
random initial condition $B_0$
and the survival function $G$ is twice continuously differentiable then there is a unique differentiable function $b$
such that the stopping time $\tilde \tau$ has the survival function $G$.

In the current paper we are interested in the FPT and IFPT problems for the random time $\tau$ from \eqref{e_tau}. More specifically:
\begin{itemize}
\item The \textit{First Passage Time Problem for Killed Brownian Motion (FPTK)}: For a given Borel measurable function $b:\R_+\to\R$, find the survival distribution of the time when the Brownian motion $(B_t)_{t\geq 0}$ is killed while being under the barrier $b$. That is, find for all $t\geq 0$,
\[
\Pr\{ \tau>t\}=\int_\R \E \left[\exp\left(-\lambda \int_0^t \1_{(-\infty,0]}(x+B_s-b(s))\,ds\right)\right] f(x)\,dx.
\]
\item The \textit{Inverse First Passage Time Problem for Killed Brownian Motion (IFPTK)}: For a given survival function $G:[0,\infty)\to[0,\infty)$, does there exist a function $b$ such that
\[
G(t) = \Pr\{ \tau>t\} = \int_\R \E \left[\exp\left(-\lambda \int_0^t \1_{(-\infty,0]}(x+B_s-b(s))\,ds\right)\right] f(x)\,dx
\]
for all $t\geq 0$?
\end{itemize}

\begin{rema}
It was key in the arguments from \cite{EEH14} to assume that $\psi$ was a smooth enough approximation of the indicator $\1_{(-\infty,0]}$. Theorem 4.1 from \cite{EEH14} shows that there exists a solution to the IFPT problem for $\tau$. However, it does not yield the uniqueness of the barrier function $b$ nor any regularity properties.
\end{rema}
In order to find the barrier $b$ satisfying the IFPTK we will study the following related PDE problem: for any given initial data
\begin{equation*}
u(0,x)=u_0(x) >0 \quad \mbox{for any }x\in\R \quad\text{and}\quad b(0)=b_0\in \R,
\end{equation*}
and any smooth function $G:=G(t)$ satisfying appropriate compatibility condition(s) that we will discuss below, we want to find a barrier function $b:=b(t)$ such that the unique solution $u:=u(t,x)$ to
\begin{equation}\label{e:MainSystem}
\left\{\begin{aligned}
\partial_t u &= \frac{1}{2} \partial_x^2 u - \1_{(-\infty,b(t)]} u \\
u(0,x) &= u_0(x) >0\\
b(0) &=b_0\in\R
\end{aligned}\right.
\end{equation}
satisfies the following identity
\begin{equation}\label{e:MassIdentity}
G(t)=\int^\infty_{-\infty} u(t,x) \;dx, \quad\mbox{for all }t\geq 0.
\end{equation}
Here, $\1_{(-\infty,b(t)]}:=\1_{(-\infty,b(t)]} (x)=\1_{(-\infty,0]} (x-b(t))$ is the indicator function of the set $(-\infty,b(t)]$. We assume that $G\in C^1$. Here we consider that $u_0$, $b_0$ and $G$ are given, and $u$ and $b$ are the unknowns. We will study the existence and uniqueness of solutions $(u,b)$ for the system \eqref{e:MainSystem} with the constraint \eqref{e:MassIdentity}.

\begin{rema}\label{r:classical}
We note that \eqref{e:MainSystem} cannot have smooth classical solutions. If one assumed that $b\in C(\R)$ and $u\in C^{1,2}([0,T]\times \R)$ for some $T>0$ then one could obtain from \eqref{e:MainSystem} that for any $t\in (0,T)$, the function
\[
x\mapsto \1_{(-\infty,0]}(x- b(t)) u(t,x)
\]
is continuous; something which is clearly false. As such one needs to work with suitable weak solutions.
\end{rema}

The \textit{hazard rate} of the random time $\tau$ is given by
\[
\frac{\Pr\{\tau \in dt ~|~ \tau > t\}}{dt}.
\]
The following heuristic shows that, due to the specific form \eqref{e_tau} of $\tau$, there will be restrictions on the hazard rates that can be covered by our model.
A straightforward computation yields
\begin{eqnarray}\label{e_hazard}
\frac{\Pr\{\tau \in dt ~|~ \tau > t\}}{dt}&:=&\lim_{\Delta t\downarrow 0} \frac{\Pr\{\tau\in(t,t+\Delta t)\}}{\Delta t\Pr\{\tau > t\}}\nonumber\\
&=& \lim_{\Delta t\downarrow 0} \frac{\Pr\left\{\int_0^t \1_{(-\infty,0]}(B_s-b(s))ds\leq U\leq \int_0^{t+\Delta t}\1_{(-\infty,0]}(B_s-b(s))\,ds \right\}}{\Delta t\Pr\left\{ \int_0^t \1_{(-\infty,0]}(B_s-b(s))\,ds\leq U \right\}}\\
&=& \lim_{\Delta t\downarrow 0} \frac{\E\left[e^{-\int_0^t \1_{(-\infty,0]}(B_s-b(s))\, ds}-e^{- \int_0^{t+\Delta t}\1_{(-\infty,0]}(B_s-b(s))\,ds }\right]}{\Delta t\E\left[\exp\left(- \int_0^t \1_{(-\infty,0]}(B_s-b(s)) \,
ds\right)\right]}\nonumber\\
&=&\frac{  \E\left[\1_{(-\infty,0]}(B_t-b(t)) \exp\left(- \int_0^t \1_{(-\infty,0]}(B_s-b(s))\, ds\right)\right]}{\E\left[\exp\left(- \int_0^t \1_{(-\infty,0]}(B_s-b(s))\, ds\right)\right]}.\nonumber
\end{eqnarray}
On the other hand, suppose that $\zeta$ is a non-negative random variable with
survival function $t \mapsto G(t):= \mathbb{P}\{\zeta > t\}$.
The corresponding hazard rate is
\[
-\frac{G'(t)}{G(t)} = - \frac{d}{dt} \log G(t).
\]
As a result of \eqref{e_hazard}, a necessary condition for a function $b$ to exist such that
the corresponding random time $\tau$ has the same distribution as $\zeta$
is that
\begin{equation}
\label{e:CompatibilityConditionforG}
0 < -G'(t) <  G(t),\quad\mbox{for all }t\geq 0.
\end{equation}
Clearly if $(u,b)$ is a solution to the IFPTK, we must have
\begin{equation}\label{e:InitialCompatibilityConditionforG}
	G(0)=\int^\infty_{-\infty} u_0(x) \;dx=1,
	\end{equation}
and	by formal differentiation,
\begin{equation}\label{e:InitialCompatibilityConditionforG'}
	G'(t)=-\int_{-\infty}^{b(t)} u(t,x) \;dx,\quad\mbox{for all }t\geq 0.
	\end{equation}
This will be proven rigorously below in Lemma \ref{lem:CompatibilityCondition2}. The above discussion gives us the following compatibility conditions:
\begin{defin}[Compatibility Conditions]\label{a:Compatibility_Conditions}
	We say that $(G,u_0,b_0)$ is a compatible data if $G\in C^1(\R_+)$, $u_0\in H^2(\R)$ and $b_0\in \R$ satisfy the following properties:
	\begin{enumerate}[(i)]
		\item $u_0(x)>0$ for all $x\in \R$,
		\item $G$ satisfies the compatibility condition~\eqref{e:CompatibilityConditionforG},
		\item $G(0)$ satisfies the initial compatibility condition~\eqref{e:InitialCompatibilityConditionforG}, and
		\item $G'(0)$, $u(0,x):=u_0(x)$ and $b(0)=b_0$ satisfy \eqref{e:InitialCompatibilityConditionforG'}.
	\end{enumerate}
\end{defin}
\begin{defin}[Weak Solutions]\label{def:WeakSoln}
Let $T>0$, $u\in C([0,T];H^1(\R))\cap L^2([0,T];H^2(\R))$ and $b\in C([0,T])$. We say that $(u,b)$ is a weak solution to the problem~\eqref{e:MainSystem}-\eqref{e:MassIdentity} if
\begin{equation}\label{e:DistributionalSoln}
\begin{aligned}
\int_0^T \int_{-\infty}^\infty u \partial_t \phi \;dxdt = \;& \int_{-\infty}^\infty u \phi |_{t=T} \;dx - \int_{-\infty}^\infty u_0 \phi |_{t=0} \;dx \\
& - \frac{1}{2} \int_0^T \int_{-\infty}^\infty u \partial_x^2 \phi \;dxdt + \int_0^T \int_{-\infty}^\infty \1_{(-\infty,b(t)]} u \phi \;dxdt
\end{aligned}
\end{equation}
holds for all $\phi\in C^\infty_c([0,T]\times\R)$, and
\[
G(t)=\int^\infty_{-\infty} u(x,t) \;dx,\quad\mbox{for all }t\geq 0.
\]
\end{defin}
The following is our main result.
\begin{restatable}{theorem}{main}\label{thm_exist&unique}
Let $(G,u_0,b_0)$ be a compatible data. Then for any fixed $T>0$, the system \eqref{e:MainSystem} has a unique weak solution $(u,b)$ on $[0,T]\times \R$ with $b\in C(\R_+)$ and $u\in C([0,T];H^1(\R))\cap L^2([0,T];H^2(\R))$ such that $u>0$ in $[0,T]\times\R$.

Furthermore, the solution $(u,b)$ satisfies
\begin{equation*}
u(t,x) = \E \left[ f(x+B_t) \exp\left(- \int_0^t \1_{(-\infty,0]}(x+B_{t-s}-b(s))\,ds\right)\right], ~~x\in \R, t\in[0,T]
\end{equation*}
and as a result
\begin{equation*}
\begin{split}
G(t) &= \int_\R \E \left[ f(x+B_t) \exp\left(- \int_0^t \1_{(-\infty,0]}(x+B_{t-s}-b(s))\,ds\right)\right]\,dx, ~~t\in[0,T]\\
&= \int_\R \E \left[\exp\left(-\lambda \int_0^t \1_{(-\infty,0]}(x+B_s-b(s))\,ds\right)\right] f(x)\,dx, ~~t\in[0,T],
\end{split}
\end{equation*}
where $f:=u_0$. This implies that the IFPTK has a unique continuous solution $b$.
\end{restatable}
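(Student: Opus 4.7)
My plan divides the argument into three parts: constructing a weak solution via a fixed-point scheme for $b$, proving uniqueness via an energy estimate, and identifying the PDE solution with the probabilistic expectation via Feynman--Kac.

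For existence, I would set up a fixed-point problem for $b$. Given a candidate $b\in C([0,T])$, the linear parabolic equation in \eqref{e:MainSystem} with bounded measurable potential $\1_{(-\infty,b(t)]}$ and initial datum $u_0\in H^2(\R)$ admits a unique weak solution $u=u[b]\in C([0,T];H^1(\R))\cap L^2([0,T];H^2(\R))$ by standard Galerkin/energy methods. Integrating in $x$ and using the decay at infinity gives $\partial_t\int u\,dx=-\int_{-\infty}^{b(t)}u(t,x)\,dx$, so matching the mass to $G$ forces
\begin{equation*}
-G'(t)=\int_{-\infty}^{b(t)}u[b](t,x)\,dx.
\end{equation*}
By the maximum principle $u[b]>0$, so the left-hand side $b\mapsto\int_{-\infty}^{b}u[b](t,y)\,dy$ is strictly increasing and continuous, and the compatibility condition $0<-G'(t)<G(t)$ guarantees that the equation has a unique solution $\Psi(b)(t)$. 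I would then seek a fixed point of $\Psi$ on a suitable closed subset of $C([0,T])$ via a Schauder-type argument, obtaining short-time existence first and then iterating to cover $[0,T]$ using uniform $H^1$ and $H^2$ bounds that come from the compatibility data.

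For uniqueness, suppose $(u_1,b_1)$ and $(u_2,b_2)$ are two weak solutions, and set $w:=u_1-u_2$. Then $w$ satisfies $w(0,\cdot)=0$ and
\begin{equation*}
\partial_t w=\tfrac{1}{2}\partial_x^2 w-\1_{(-\infty,b_1(t)]}w+\bigl(\1_{(-\infty,b_2(t)]}-\1_{(-\infty,b_1(t)]}\bigr)u_2.
\end{equation*}
A standard $L^2$ energy estimate, together with the bound $\|\1_{(-\infty,b_1(t)]}-\1_{(-\infty,b_2(t)]}\|_{L^1}=|b_1(t)-b_2(t)|$ and the Sobolev embedding $H^2(\R)\hookrightarrow L^\infty(\R)$ to control $u_2$, yields $\|w(t)\|_{L^2}^2\lesssim \int_0^t|b_1(s)-b_2(s)|^2\,ds$ (after Cauchy--Schwarz). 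In the reverse direction, writing the mass constraint as $\int_{-\infty}^{b_i(t)}u_i(t,x)\,dx=-G'(t)$ and using the uniform strict positivity of $u_1,u_2$ on compact sets together with $\|w(t)\|_{L^2}$ closeness gives $|b_1(t)-b_2(t)|\lesssim \|w(t)\|_{L^2}$. Grönwall then forces $w\equiv 0$ and $b_1\equiv b_2$.

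For the probabilistic identification, once the regularity of $(u,b)$ is in hand, the recent Feynman--Kac results of Glau apply because the potential $\1_{(-\infty,b(t)]}$ is bounded and Borel measurable; this yields $u(t,x)=\E[f(x+B_t)\exp(-\int_0^t\1_{(-\infty,0]}(x+B_{t-s}-b(s))\,ds)]$. Integrating in $x$, applying Fubini, and using time-reversal of Brownian increments produces the second representation in the theorem statement and shows that $b$ solves the IFPTK. The main obstacle I anticipate is running the fixed-point/continuity argument cleanly: since $b\mapsto\1_{(-\infty,b(t)]}$ is discontinuous in $L^\infty$ but only Lipschitz in $L^1$, one must frame all continuity and compactness estimates at the $L^2$ level for $u$ and the $C^0$ level for $b$, and carefully exploit parabolic smoothing to recover $L^\infty$ control on $u$ where needed.
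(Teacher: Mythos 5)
Your probabilistic identification step matches the paper's (both invoke Glau's Feynman--Kac theorem), but your existence and uniqueness arguments diverge from the paper, and the uniqueness argument as sketched has a genuine gap.

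\textbf{Existence.} You propose a Schauder fixed-point argument for the map $b\mapsto\Psi(b)$, whereas the paper constructs a \emph{monotone} iterative scheme: starting from the heat flow $u_1$, it alternately sets $-G'(t)=\int_{-\infty}^{b_k(t)}u_k\,dx$ and then solves the linear PDE with $\1_{(-\infty,b_k(t)]}$ to get $u_{k+1}$, proving by a duality-based comparison principle that $u_{k+1}\le u_k$ and $b_{k+1}\ge b_k$. The monotonicity gives pointwise convergence for free, and Dini's theorem upgrades it to uniform convergence on compacts; passing to the limit in the weak formulation is then straightforward. Your Schauder plan would require establishing continuity and compactness of $\Psi$ on a closed convex subset of $C([0,T])$, and this is where your proposal is vague: because $b\mapsto\1_{(-\infty,b(t)]}$ is only $L^1$-continuous, proving continuity of $b\mapsto u[b]$ and then of $\Psi$ in $C([0,T])$ requires a nontrivial stability estimate, and compactness of the image of $\Psi$ needs a uniform-in-$b$ modulus of continuity for $t\mapsto u[b](t,\cdot)$ together with a uniform \emph{lower} bound on $u[b]$ so that the implicit definition of $\Psi(b)(t)$ is uniformly Lipschitz in $G'$. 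The paper obtains that lower bound (the function $f_d$) and upper bound $f_h$ explicitly and uses them crucially; your scheme would need the same. The monotone scheme is not just a stylistic choice: it sidesteps exactly the continuity/compactness technicalities that the discontinuous indicator makes delicate.

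\textbf{Uniqueness: the gap.} Your plan is to run a standard $L^2$ energy estimate to get $\|w(t)\|_{L^2}^2\lesssim\int_0^t|b_1-b_2|^2\,ds$ and then close the loop via $|b_1(t)-b_2(t)|\lesssim\|w(t)\|_{L^2}$. The first half is fine, but the second half is not: from $\int_{-\infty}^{b_1}u_1\,dx=\int_{-\infty}^{b_2}u_2\,dx$ you obtain (say $b_1\le b_2$)
\begin{equation*}
c\,|b_1(t)-b_2(t)|\;\le\;\int_{b_1(t)}^{b_2(t)}u_2(t,x)\,dx\;=\;\Bigl|\int_{-\infty}^{b_1(t)}w(t,x)\,dx\Bigr|\;\le\;\|w(t)\|_{L^1((-\infty,b_1(t)])},
\end{equation*}
which is an $L^1$ bound over a \emph{half-line}, not controlled by $\|w(t)\|_{L^2(\R)}$. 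Cauchy--Schwarz over the half-line is not available and Sobolev embedding does not rescue it. This is precisely the obstruction that pushes the paper to work entirely at the $L^1$ level. The paper's argument is a Kru\v{z}kov-type doubling of variables which yields the $L^1$ inequality $\|\tilde u(\tau)\|_{L^1}\le\int_0^\tau\|(\1_{(-\infty,b_1(t)]}-\1_{(-\infty,b_2(t)]})u_2(t)\|_{L^1}\,dt$, combined with the key algebraic identity (their \eqref{e:technical_identity_for_a_L^1_norm}) that converts the indicator-difference term into $\|\1_{(-\infty,b_1(t)]}\tilde u(t)\|_{L^1}\le\|\tilde u(t)\|_{L^1}$ using the mass constraint; Gr\"onwall in $L^1$ then closes the argument without ever estimating $|b_1-b_2|$ against a norm of $\tilde u$. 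If you want to retain your $L^2$ framework, you would need a genuinely new idea to bound the half-line $L^1$ norm of $w$ by $\|w\|_{L^2}$ (or weaken the target to an $L^1$-Gr\"onwall, in which case you effectively reconstruct the paper's method and must handle the non-smoothness of $\sign(w)$, e.g.\ via doubling variables).
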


\textbf{Structure of the Paper.} Section \ref{s:linearizedprob} is devoted to the study of a linearized version of our PDE. This linearized version is used in Section \ref{s:Scheme} to construct an approximation scheme which will be shown to converge to a weak solution $(u,b)$ of the constrained PDE system in Sections \ref{ss:convergence} and \ref{ss:consistency}. With the existence of a solution in hand we use in Section \ref{s:prob} a general version of the Feynman-Kac formula to prove that the weak solution gives the correct probabilistic interpretation. Making use of Feynman-Kac formula, we can prove some further properties of the weak solution $(u,b)$ which lead in Section \ref{ss:uniqueness} to the proof that the solution we constructed is actually the \textit{unique} solution to the constrained PDE system. We put all these pieces together in Section \ref{s:IFPTK} where we show that the constructed barrier $b$ is the unique solution to the Inverse First Passage Time Problem for Killed Brownian Motion (IFPTK). The solution to the First Passage Time Problem is given in Section \ref{s:first}. Applications to mathematical finance are showcased in Section \ref{s:app}. We finish by conjecturing in Section \ref{s:multi} a result for general one-dimensional diffusions.

\textbf{List of Notation.} The following is a list of spaces that we will use throughout the paper:
\begin{itemize}
\item For any non-negative integers $m,n$ and $T>0$ we define the space
\[
C^{m,n}([0,T]\times\R):= \left\{ f:[0,T]\times\R : f(x,\cdot)\in C^n([0,T]), x\in\R, ~f(\cdot,t)\in C^m(\R),~ t\in [0,T] \right\}
\]
	\item For any non-negative integer $s$, and non-empty subset $A\subset\R$, we define the space
	\[
		H^s(A) := \left\{ f:A\to\R : \|f\|_{H^s(A)} <\infty \right\}
	\]
	where the $H^s$ norm is given by
	\[
		\|f\|_{H^s(A)} := \left( \sum_{k=0}^s \int_A |\partial_x^k f (X)|^2 \;dx \right)^{1/2}.
	\]
	When $A:=\R$, we may lighten the notation by ignoring the $A$-dependence, namely, write $H^s:=H^s(\R)$ and $\|\cdot\|_{H^s} := \|\cdot\|_{H^s(\R)}$. In particular, when $s=0$, we will follow the standard notation to write $L^2:=H^0$ and $\|\cdot\|_{L^2} := \|\cdot\|_{H^0}$.
	\item For any positive constant $T$ and non-negative integer $s$, we define the spaces
	\[
		C([0,T];H^s(\R)) := \left\{ f:[0,T]\times\R\to\R : \sup_{0\leq t\leq T}\|f(t,\cdot)\|_{H^s} <\infty \right\}
	\]
	and
	\[
		L^2([0,T];H^s(\R)) := \left\{ f:[0,T]\times\R\to\R : \int_{0}^{T} \|f(t,\cdot)\|_{H^s}^2 \;dt <\infty \right\}.
	\]
\end{itemize}

\section{Linearized Problem}\label{s:linearizedprob}
In this section we will study the linearized problem, which will be one of the key ingredients used in Section \ref{s:existence} for solving the problem \eqref{e:MainSystem} under the constraint \eqref{e:MassIdentity}.

Let us begin with a simple inequality.
\begin{lemma} \label{inter}
For any real number $k>0$ and any measurable function $z:\R\to\R$, the following holds
\[
|z(x)|\leq \sqrt{2}\|z\|_{L^2((-\infty,x])}^\frac12 \|\pa_x z\|_{L^2((-\infty,x])}^\frac12\leq k \|\pa_x z\|_{L^2((-\infty,x])}+\frac{1}{2k}\|z\|_{L^2((-\infty,x])}.
\]
As a consequence
\[
 \|z\|_{L^\infty(\R)}\leq \sqrt{2}\|z\|_{L^2(\R)}^\frac12 \|\pa_x z\|_{L^2(\R)}^\frac12\leq k \|\pa_x z\|_{L^2(\R)}+\frac{1}{2k}\|z\|_{L^2(\R)}.
\]
Furthermore, for any $x$, $y\in \R$, one has
\[
|z(x)-z(y)|\le |x-y|^\frac{1}{2} \|\pa_x z\|_{L^2}.
\]
\end{lemma}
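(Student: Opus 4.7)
The proof is a standard Gagliardo--Nirenberg style chain of elementary one-dimensional estimates; the only real work is to identify the right identity to start from and to apply Cauchy--Schwarz and Young's inequality correctly. I would handle the three inequalities in order.

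For the first inequality, the plan is to use the fundamental theorem of calculus applied to $z^2$. Concretely, assuming first that $z\in C^1_c(\R)$ (so that $z$ vanishes at $-\infty$), write
\[
z(x)^2 \;=\; \int_{-\infty}^{x} \frac{d}{dy}\bigl(z(y)^2\bigr)\,dy \;=\; 2\int_{-\infty}^{x} z(y)\,\partial_y z(y)\,dy,
\]
and then apply Cauchy--Schwarz to the right-hand side to obtain $z(x)^2\le 2\|z\|_{L^2((-\infty,x])}\|\partial_x z\|_{L^2((-\infty,x])}$. Taking square roots gives the first bound. The general case where $z$ is only measurable (but with $z,\partial_x z\in L^2_{\mathrm{loc}}$, else the estimate is vacuous) follows by a routine density/mollification argument. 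The second half of the first inequality is simply Young's inequality $\sqrt{2ab}\le ka\cdot\tfrac{1}{k} + \tfrac{1}{2}\cdot(\cdots)$; more precisely, from $2\sqrt{XY}\le X+Y$ applied with $X=kb$ and $Y=a/(2k)$ one gets $\sqrt{2ab}\le kb+\tfrac{a}{2k}$, which is exactly what is needed with $a=\|z\|_{L^2((-\infty,x])}$ and $b=\|\partial_x z\|_{L^2((-\infty,x])}$.

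The second pair of inequalities is an immediate corollary: since the first inequality is uniform in $x$ and since $\|z\|_{L^2((-\infty,x])}\le\|z\|_{L^2(\R)}$ and similarly for $\partial_x z$, taking the supremum over $x\in\R$ yields
\[
\|z\|_{L^\infty(\R)}\le\sqrt{2}\,\|z\|_{L^2(\R)}^{1/2}\|\partial_x z\|_{L^2(\R)}^{1/2},
\]
and then Young's inequality as above produces the $k$-dependent bound.

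For the Hölder estimate, I would again use the fundamental theorem of calculus: for $x<y$ (without loss of generality),
\[
|z(y)-z(x)| \;=\; \left|\int_x^y \partial_s z(s)\,ds\right| \;\le\; |y-x|^{1/2}\,\|\partial_x z\|_{L^2([x,y])} \;\le\; |y-x|^{1/2}\,\|\partial_x z\|_{L^2(\R)}
\]
by Cauchy--Schwarz applied to $\int_x^y 1\cdot\partial_s z(s)\,ds$. No step here is really an obstacle; the main point to be careful about is justifying the fundamental theorem of calculus for merely measurable $z$, which is standard by approximating $z$ by smooth compactly supported functions with respect to the $H^1$-norm.
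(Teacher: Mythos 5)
Your argument is correct and matches the approach the paper intends: the paper simply states that the lemma ``follows immediately from the Agmon inequality, Young's inequality, and the Cauchy--Schwarz inequality'' and omits the details, while you supply exactly those details, in effect reproving Agmon's inequality via $z(x)^2 = 2\int_{-\infty}^x z\,\partial_y z\,dy$ plus Cauchy--Schwarz, then applying Young's inequality and another Cauchy--Schwarz for the H\"older bound. No discrepancies.
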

The proof of Lemma \ref{inter} follows immediately from the Agmon inequality, Young's inequality, and the Cauchy-Schwarz inequality, so the details will be omitted. In the following, we study a linearized problem.
\begin{claim}\label{cl:Existence&Uniqueness_of_Linear_Parabolic_Eqt}
Let $c:[0,\infty)\rightarrow \R$ be a given curve, and
\[
\psi(t,x):=\1_{(-\infty,c(t)]}(x),
\]
where $\1_{(-\infty,c(t)]}$ is the indicator function of the set $(-\infty,c(t)]$. Then for every $u_0\in H^1(\R)$, there exists a unique solution $u\in C([0,\infty);H^1(\R))\cap L^2([0,\infty);H^2(\R))$ to the Cauchy problem
\begin{equation}\label{e_cauchy}
\left\{
\begin{aligned}
\partial_t u-\frac{1}{2}\partial_x^2 u&=-\psi u,\\
u(0,x)&=u_0(x),\quad\mbox{for all } x\in \R.
\end{aligned}
\right.
\end{equation}
Furthermore, we also have the following estimate:
\bb\label{estLinftyH1&L2H2}
\sup_{0\leq t\leq T} \|u(t)\|_{H^1}+ \left(\int_0^T \|\pa_x u(t)\|_{H^1}^2 \;dt \right)^{\frac{1}{2}} \leq 3 \|u_0\|_{H^1}.
\ee
\end{claim}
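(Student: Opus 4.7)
The equation~\eqref{e_cauchy} is a linear parabolic PDE whose only non-standard feature is that the zeroth order coefficient $\psi$ is bounded ($0\le\psi\le 1$) but discontinuous in $x$. I will follow the classical playbook: derive a priori energy estimates, pass to the limit in a Galerkin scheme to obtain existence, and recover uniqueness from the same energy estimate applied to the difference of two solutions.

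To establish the a priori estimate I plan to test the equation against $u$ and against $-\pa_x^2 u$. Testing against $u$, integrating by parts on the diffusion term, and using $\psi\ge 0$ yields
\[
\tfrac{1}{2}\tfrac{d}{dt}\|u\|_{L^2}^2 + \tfrac{1}{2}\|\pa_x u\|_{L^2}^2 + \int\psi u^2\,dx = 0,
\]
hence $\|u(t)\|_{L^2}\le\|u_0\|_{L^2}$ and $\int_0^T\|\pa_x u\|_{L^2}^2\,dt \le \|u_0\|_{L^2}^2$. Testing against $-\pa_x^2 u$ (without ever moving a derivative onto $\psi$), integrating by parts only on the time term, and using Cauchy--Schwarz together with Young's inequality and $|\psi|\le 1$ gives
\[
\tfrac{d}{dt}\|\pa_x u\|_{L^2}^2 + \tfrac{1}{2}\|\pa_x^2 u\|_{L^2}^2 \le 2\|u\|_{L^2}^2.
\]
Combining the two bounds and tracking the constants carefully yields the stated estimate~\eqref{estLinftyH1&L2H2}.

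For existence I plan to run a Galerkin approximation on the span of the first $N$ Hermite functions $\{h_j\}$. Since $\psi$ is bounded and the curve $c(\cdot)$ enters only through $\psi$, the projected system is a linear ODE with $L^\infty_t$ coefficients $t\mapsto (\psi(t,\cdot)h_j,h_k)_{L^2}$, uniquely solvable by Carath\'eodory theory, and the energy estimates above apply to the approximations $u^N$ uniformly in $N$. Extracting weak-$\ast$ limits in $L^\infty_tH^1_x$ and weak limits in $L^2_tH^2_x$, together with Aubin--Lions (applied using $\pa_t u^N$ bounded in $L^2_tL^2_x$ by comparison in the equation) to obtain strong $L^2_tL^2_{x,\mathrm{loc}}$ convergence, lets me pass to the limit in the linear term $\psi u^N$. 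Time continuity $u\in C([0,T];H^1)$ then follows from the standard embedding for functions with $u\in L^2([0,T];H^2)$ and $\pa_t u\in L^2([0,T];L^2)$. Uniqueness is immediate from the first energy estimate: if $u_1,u_2$ solve~\eqref{e_cauchy} with the same initial datum, their difference $w$ solves the homogeneous problem with zero initial datum, so $\|w(t)\|_{L^2}^2\le 0$.

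The main technical difficulty is that $\psi$ is only an $L^\infty$ function in $x$: the distributional derivative $\pa_x\psi=-\delta_{c(t)}$ is a measure and no classical solution theory applies. The key observation that makes the energy method still go through is that, by testing only against $u$ and $-\pa_x^2 u$ and by always integrating by parts onto the $\pa_x^k u$ factors rather than onto $\psi$, one needs only the pointwise bounds $0\le\psi\le 1$ and never any distributional derivative of $\psi$. Consequently the proof proceeds exactly as for smooth bounded potentials.
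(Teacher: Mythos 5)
Your overall plan (energy estimates at the $L^2$ and $H^1$ levels, Galerkin for existence, energy for uniqueness) is sound, and your existence argument via Galerkin plus Aubin--Lions is a perfectly valid alternative to what the paper does (the paper runs a contraction mapping in $C([0,T];L^2)$ on a short time interval $T=1/4$ and extends by the semigroup property). Testing against $-\partial_x^2 u$ instead of differentiating the equation in $x$ is also an attractive move, since it sidesteps the distributional term $\partial_x\psi=-\delta_{c(t)}$ that the paper has to handle with the interpolation Lemma~\ref{inter}. However, there is a genuine gap in your derivation of the $H^1$ estimate, and it is precisely the reason the paper takes the more painful route.

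You bound $\bigl|\int\psi\,u\,\partial_x^2u\,dx\bigr|\le\|u\|_{L^2}\|\partial_x^2u\|_{L^2}$ using only $|\psi|\le1$, arriving at
\[
\tfrac{d}{dt}\|\partial_x u\|_{L^2}^2+\tfrac12\|\partial_x^2u\|_{L^2}^2\le 2\|u\|_{L^2}^2.
\]
Integrating this in time and using $\|u(t)\|_{L^2}\le\|u_0\|_{L^2}$ gives at best
\[
\|\partial_x u(t)\|_{L^2}^2+\tfrac12\int_0^t\|\partial_x^2u\|_{L^2}^2\,ds\le\|\partial_x u_0\|_{L^2}^2+2t\,\|u_0\|_{L^2}^2,
\]
which grows linearly in $T$. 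The estimate~\eqref{estLinftyH1&L2H2} you are supposed to prove has the $T$-independent constant $3$, and your bound exceeds it already for $T$ of order one. So ``tracking the constants carefully'' does not, in fact, yield the stated estimate from the inequalities you wrote down.

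The missing observation is that $\psi$ is an indicator, so $\psi^2=\psi$, and therefore $\|\psi u\|_{L^2}=\|u\|_{L^2((-\infty,c(t)])}$. Using this sharper Cauchy--Schwarz in $\int\psi u\,\partial_x^2u\,dx$ gives
\[
\tfrac12\tfrac{d}{dt}\|\partial_x u\|_{L^2}^2+\tfrac14\|\partial_x^2u\|_{L^2}^2\le\|u\|_{L^2((-\infty,c(t)])}^2,
\]
and the right-hand side is exactly (half of) the dissipative term $2\|u\|_{L^2((-\infty,c(t)])}^2$ appearing in your own $L^2$ identity. Substituting $2\|u\|_{L^2((-\infty,c(t)])}^2=-\frac{d}{dt}\|u\|_{L^2}^2-\|\partial_x u\|_{L^2}^2$ and integrating in time closes the estimate with a $T$-independent constant, as required. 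This is precisely the mechanism the paper exploits, only in the differentiated-equation form: they keep the good term $2\|u\|_{L^2((-\infty,c(t)])}^2$ from the $L^2$ identity in play and feed it back into the $H^1$-level inequality. Your route is cleaner (no delta functions, no Agmon interpolation lemma), but it only works if you retain the structural fact $\psi^2=\psi$ rather than discarding it to $|\psi|\le1$.
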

\begin{proof} For any fixed time $T>0$, we define a map $L:C([0,T];H^1(\R))\to C([0,T];H^1(\R))$  by
$L(u):=v$ where $v$ is the unique $C([0,T];H^1(\R))$ solution to
\begin{align*}
\partial_t v-\frac{1}{2}\partial_{xx} v&=-\psi u,\\
v(0,x)&=u_0(x), \quad\mbox{for all }x\in \R.
\end{align*}
It follows from the standard theory for non-homogeneous heat equations that $L$ is well-defined.

The standard energy estimate for the heat equation yields
\[
\|L(u_1)-L(u_2)\|_{C([0,T];L^2)}\leq \frac{T}{1-T} \|u_1-u_2\|_{C([0,T];L^2)}.
\]
As a result, $L$ is a contraction mapping on $C([0,T];L^2(\R))$ provided that $T\in (0,1/2)$. Hence, by the contraction mapping principle, we can solve \eqref{e_cauchy} in $C([0,T];L^2(\R))$ uniquely within a short time $T$, say $T=1/4$. Note that this time $T=1/4$ does not depend on the initial condition $u_0$. The solvability in $C([0,\infty);L^2(\R))$ follows directly from the semi-group property for linear parabolic equations. Since one can obtain the $C([0,T];H^1(\R))$ and $L^2([0,T];H^2(\R))$ regularities by using the standard regularizing argument and estimate \eqref{estLinftyH1&L2H2}, it remains to verify \eqref{estLinftyH1&L2H2}.
\begin{rema}
The following steps have to be justified by arguments such as mollification or approximation by smooth functions. 
Technically, a weak solution to \eqref{e_cauchy} only satisfies the equation in the distributional sense, like \eqref{e:DistributionalSoln}. Therefore, in order to make the formal argument below rigorous, one can regularize the weak solution as follows: for any $\epsilon>0$ and $(y,s)\in\R\times[\epsilon,\infty)$, one may choose the test function $\phi(x,t) := \varphi_\epsilon(y-x,s-t):=\varphi((y-x)/\epsilon,(s-t)/\epsilon,)$ where $\varphi$ is a standard Friedrichs mollifier with a compact support in $[-1,1]^2$. As a result, the smooth function $u_\epsilon := u * \varphi_\epsilon$ satisfies the equation
\begin{equation}\label{e:Regularized&Linearized_PDE}
	\partial_t u_\epsilon-\frac{1}{2}\partial_x^2 u_\epsilon = - \psi u_\epsilon - \left\{ (\psi u)_\epsilon - \psi u_\epsilon \right\}
\end{equation}
classically in $\R\times[\epsilon,\infty)$, where the smooth function $(\psi u)_\epsilon:= (\psi u)*\varphi_\epsilon$. Now, one can apply the formal estimations below directly to~\eqref{e:Regularized&Linearized_PDE}, and obtain unifrom (in $\epsilon$) estimates with error terms that come from the commutator of multiplying $\phi$ and convoluting with the $\varphi_\epsilon$. These error terms can be shown to vanish as $\epsilon\to 0^+$ by using the standard properties of mollifiers and commutator estimates. For example, while deriving an analogue of \eqref{eq:enL2} for $u_\epsilon$, one will have an extra error term 
\[
	2 \int_{\R} u_\epsilon \left\{ (\psi u)_\epsilon - \psi u_\epsilon \right\} \;dx = O \left( \|u\|_{L^2} \left\{ \|(\psi u)_\epsilon - \psi u \|_{L^2} + \| \psi \|_{L^\infty} \| u - u_\epsilon \|_{L^2} \right\} \right),
\]
which converges to $0$ since $\phi u$ and $u\in C([\epsilon,\infty);L^2(\R))$. Hence, using the fact the $u_\epsilon\to u$ as $\epsilon\to 0^+$, one can obtain \eqref{eq:enL2} by passing to the limit in the corresponding estimate for $u_\epsilon$. One can also make other formal estimates below rigorous in the same manner. We will skip these technicalities since they are standard, tedious, and not the main idea of the proof. This types of standard arguments for mollification or approximation can be found in \cite{TaylorPDEIII}.
\end{rema}
Next, we are going to derive estimate \eqref{estLinftyH1&L2H2}. Multiplying equation \eqref{e_cauchy} by the solution $u$, and then integrating over $\R$, we have, by integrating by parts,
\bb
\label{eq:enL2}
\diff{}{t}\|u\|_{L^2}^2+\|\pa_x u\|_{L^2}^2+2\|u\|^2_{L^2((-\infty,c(t)])} =0.
\ee
This is the energy identity on the $L^2$ level.

Furthermore, we differentiate equation \eqref{e_cauchy} with respect to $x$, and obtain an equation for $w:=\pa_x u$ as follows:
\bb\label{e:diff}
\pa_t w-\frac{1}{2}w_{xx}=-\psi w +\delta_{c(t)} u,
\ee
where $\delta_{c(t)}$ is the Dirac delta function at the position $c(t)$.
We estimate $u(t,c(t))w(t,c(t))$ via Lemma \ref{inter}:
\bb\label{e:est}
\begin{split}
|u(t,c(t))w(t,c(t))|&\leq \frac{1}{2}|u(t,c(t))|^2+\frac{1}{2}|w(t,c(t))|^2\\
&\leq  \|u\|^2_{L^2((-\infty,c(t)])} + \|w\|_{L^2((-\infty,c(t)])}^2 + \frac{1}{3}\|\pa_x w\|_{L^2}^2.
\end{split}
\ee
Multiplying \eqref{e:diff} by $w$, and then integrating over $\R$, we obtain
\[
\diff{}{t}\|w\|_{L^2}^2+\|\partial_x w\|_{L^2}^2+2\|w\|_{L^2((-\infty,c(t)]}^2\leq 2 |u(t,c(t))w(t,c(t))|.
\]
We apply the estimate \eqref{e:est} to obtain
\[
\diff{}{t}\|w\|_{L^2}^2+\frac{1}{3}\|\partial_x w\|_{L^2}^2\leq 2 \|u\|^2_{L^2((-\infty,c(t)])}.
\]
The $L^2$ energy identity (\ref{eq:enL2}) implies that $2 \|u\|^2_{L^2((-\infty,c(t)])} = -\dfrac{d}{dt}\|u\|_{L^2}^2 - \|w\|_{L^2}^2$, and therefore, after integrating in time, we have
\[
\| u(t)\|_{H^1}^2+\frac{1}{3}\int_0^t \|\pa_x u\|_{H^1}^2\leq \| u(0)\|_{H^1}^2,
\]
which concludes the proof.
\end{proof}

We next investigate what happens to the solution of the equation above when the initial data is positive and integrable.
\begin{claim}\label{fd<u<fh}
\label{cl:posu0}
For every nonnegative $u_0$, which is not identically zero, there exists $f_d(t,x)$ which is strictly positive for $t>0$ such that every $C([0,T];H^1(\R))\cap L^2([0,T];H^2(\R))$ solution of \eqref{e_cauchy} satisfies, for all $0<t\leq T$,
\begin{equation}\label{e:u_geq_f_d}
 u(t,x)\geq f_d(t,x):=\frac{e^{-t}}{\sqrt{2\pi t}}\int_{\R} e^{-\frac{(x-y)^2}{2t}} u_0(y)dy.
\end{equation}
In addition, if the initial data $u_0$ is also integrable, then there exists a function $f_h(t,x)$, which can be written explicitly as well, such that $u(t,x)\leq f_h(t,x)$ and $\int_\R f_h(t,x)\,dx=\int_\R u_0\, dx$.
\end{claim}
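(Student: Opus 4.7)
The plan is to bound $u$ from below and above by solutions to two simpler linear equations via a Stampacchia-type truncation argument at the level of the weak formulation.

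For the lower bound, I would take $f_d(t,x) := e^{-t} (P_t u_0)(x)$, where $P_t$ denotes the one-dimensional Gaussian heat semigroup of variance $t$; this is exactly the explicit expression in \eqref{e:u_geq_f_d}. A direct computation shows $f_d$ solves the ``worst case'' fully-killed heat equation $\partial_t f_d - \tfrac{1}{2}\partial_x^2 f_d + f_d = 0$ with initial datum $u_0$, which is consistent with the heuristic that $u$ is killed less aggressively than $f_d$ since $\psi \leq 1$. Strict positivity of $f_d$ for $t>0$ is immediate from the strict positivity of the Gaussian kernel together with $u_0\geq 0$, $u_0\not\equiv 0$. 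To make the comparison rigorous, set $v := u - f_d$, which satisfies
\[
\partial_t v - \tfrac{1}{2}\partial_x^2 v + \psi v = (1-\psi) f_d
\]
with $v(0,\cdot) \equiv 0$ and nonnegative right-hand side (since $0 \leq \psi \leq 1$ and $f_d \geq 0$). Testing the weak formulation against $v^- := \max(-v,0)$ yields
\[
\tfrac{1}{2}\tfrac{d}{dt}\|v^-\|_{L^2}^2 + \tfrac{1}{2}\|\partial_x v^-\|_{L^2}^2 + \int_{\R} \psi (v^-)^2\,dx \leq 0,
\]
from which Gronwall gives $v^- \equiv 0$, i.e.\ $u \geq f_d > 0$.

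For the upper bound, assuming additionally $u_0 \in L^1(\R)$, I would take $f_h(t,x) := (P_t u_0)(x)$, the solution of the pure heat equation with initial datum $u_0$. Then $w := u - f_h$ satisfies
\[
\partial_t w - \tfrac{1}{2}\partial_x^2 w = -\psi u,
\]
with $w(0,\cdot) \equiv 0$. Since $u \geq f_d \geq 0$ by the first step, the right-hand side is $\leq 0$, so testing against $w^+ := \max(w,0)$ in the same fashion yields
\[
\tfrac{1}{2}\tfrac{d}{dt}\|w^+\|_{L^2}^2 + \tfrac{1}{2}\|\partial_x w^+\|_{L^2}^2 \leq 0,
\]
whence $w^+ \equiv 0$ and $u \leq f_h$. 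Finally, mass conservation for the heat semigroup applied to $u_0 \in L^1(\R)$ gives $\int_{\R} f_h(t,x)\,dx = \int_{\R} u_0(x)\,dx$ for all $t \geq 0$, as required.

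The main technical obstacle is that the truncations $v^-$ and $w^+$ are only $H^1$-regular in space, so one cannot simply insert them as test functions in the distributional formulation of Definition \ref{def:WeakSoln}. This is handled exactly by the space-time mollification procedure already outlined in the remark within the proof of Claim \ref{cl:Existence&Uniqueness_of_Linear_Parabolic_Eqt}: one regularises $u$ and $f_d$ (or $f_h$) by convolution with a Friedrichs mollifier, carries out the truncation energy estimate classically on the regularised equation, controls the resulting commutator error terms via $\|u\|_{L^2}$ and the standard mollifier properties, and then passes to the limit $\epsilon \to 0^+$. All remaining steps reduce to direct algebraic identities, the chain rule $\partial_x v^\pm = \pm\mathbf{1}_{\{\pm v > 0\}}\partial_x v$ for $H^1$-functions, and standard properties of the Gaussian kernel, so no further difficulty is anticipated.
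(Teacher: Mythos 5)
Your proof is correct and essentially matches the paper's: you use the same two comparison functions, the damped-heat solution $f_d$ and the heat solution $f_h$, and your Stampacchia truncation energy estimate is just the standard proof of the parabolic comparison principle that the paper invokes as a black box (with the same mollification caveat already flagged in the remark inside the proof of Claim \ref{cl:Existence&Uniqueness_of_Linear_Parabolic_Eqt}). One small tightening over the paper's exposition is worth noting: by estimating $v = u - f_d$ directly and observing that the source term $(1-\psi)f_d \geq 0$ holds unconditionally, you avoid the paper's ``as long as $u$ stays non-negative'' phrasing, which strictly speaking calls for an extra continuation-in-time argument to close.
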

\begin{proof}
As long as $u$ stays non-negative, it satisfies
\[
\partial_t u-\frac{1}{2}\partial_{xx} u+u\geq 0
\]
in the sense of distribution, and hence, by the comparison principle for linear parabolic equations,
\[
u\geq f_d,
\]
where the function $f_d$ can be written explicitly as
\[
f_d(t,x)=\frac{e^{-t}}{\sqrt{2\pi t}}\int_{\R} e^{-\frac{(x-y)^2}{2t}} u_0(y)dy,
\]
which is a solution of the initial value problem of damped heat equation
\[\left\{
\begin{aligned}
\partial_t f-\frac{1}{2}\partial_{xx}f+f&=0\\
f(x,0)&=u_0.
\end{aligned}
\right.\]
One can easily see that $f_d$ is strictly positive for all $t>0$.
Similarly, $u$ also satisfies
\[
\partial_t u-\frac{1}{2}\partial_{xx} u\leq  0,
\]
in the sense of distribution, so by the comparison principle for heat equations,
\[
u\leq f_h,
\]
where
\[
f_h(t,x)=\frac{1}{\sqrt{2\pi t}}\int_{\R} e^{-\frac{(x-y)^2}{2t}} u_0(y)dy.
\]
The function $f_h$ has the required property as it is a solution of the homogeneous heat equation.
\end{proof}

A direct consequence of Claim \ref{fd<u<fh} is the following
\begin{corr}
\label{pos}
Let $c\in C(\R^+)$ and $u$ be a $C([0,\infty);H^1(\R))\cap L^2([0,\infty);H^2(\R))$ solution to \eqref{e_cauchy} with a non-negative initial data $u_0$ satisfying $u_0(c(0))>0$. Then $u(c(t),t)>0$ for all $t\geq 0.$
\end{corr}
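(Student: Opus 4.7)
The plan is to deduce this corollary essentially as a direct consequence of Claim \ref{fd<u<fh}. First I would observe that since $u_0\in H^1(\R)$ is embedded in $C(\R)$ by Lemma \ref{inter} (the one-dimensional Sobolev embedding), the assumption $u_0(c(0))>0$ is meaningful pointwise. Combined with nonnegativity, continuity of $u_0$ at $c(0)$ gives an open neighborhood $V$ of $c(0)$ on which $u_0$ is bounded below by some $\eta>0$; in particular $u_0$ is not identically zero, so the hypothesis of Claim \ref{fd<u<fh} applies.

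Next I would invoke Claim \ref{fd<u<fh} to conclude $u(t,x)\ge f_d(t,x)$ for all $t>0$, where
\[
	f_d(t,x)=\frac{e^{-t}}{\sqrt{2\pi t}}\int_{\R} e^{-\frac{(x-y)^2}{2t}} u_0(y)\,dy.
\]
Because the heat kernel $e^{-(x-y)^2/(2t)}$ is strictly positive for every $t>0$ and every $x,y\in\R$, and because $u_0\geq 0$ with $u_0\ge \eta > 0$ on the neighborhood $V$, the integral is bounded below by $\eta \int_V e^{-(x-y)^2/(2t)}\,dy >0$ for every $x\in\R$ and every $t>0$. Therefore $f_d(t,x)>0$ on $(0,\infty)\times\R$, and in particular $u(t,c(t))\ge f_d(t,c(t))>0$ for all $t>0$.

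Finally, at $t=0$ the conclusion reads $u(0,c(0))=u_0(c(0))>0$, which is precisely the hypothesis. Combining the two cases yields $u(t,c(t))>0$ for all $t\ge 0$, as desired. There is essentially no obstacle here beyond confirming that the continuity assumption on $c$ plays no role in the positivity argument (we only use pointwise positivity of $f_d$, not any regularity in $t$); continuity of $c$ is inherited from the framework but is not needed to get strict positivity at each individual time.
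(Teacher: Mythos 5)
Your proof is correct and follows exactly the route the paper intends: the corollary is stated as a ``direct consequence'' of Claim~\ref{fd<u<fh}, and your argument spells out that consequence — apply the lower bound $u\geq f_d$, use the strict positivity of $f_d$ for $t>0$ (which already holds whenever $u_0$ is nonnegative and not identically zero, so your neighborhood estimate is a harmless but unnecessary elaboration), and handle $t=0$ via the initial condition.
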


\section{Approximate Scheme}\label{s:Scheme}
In this section we will introduce and study the approximate scheme for the problem \eqref{e:MainSystem}-\eqref{e:MassIdentity}. More precisely, we will construct a sequence of approximate solutions by using an iteration scheme. The convergence of the approximate solutions will be studied in Section \ref{s:existence}.

First of all, let us introduce the iteration scheme as follows:
\begin{enumerate}[(i)]
	\item Define $u_1:=u_1(t,x)$ as the unique $C([0,T];H^1(\R))\cap C^\infty ((0,T]\times\R)$ solution to the homogeneous heat equation
	\begin{equation}\label{e:AppSolnInit}
	\left\{\begin{aligned}
	\partial_t u_1 -\frac{1}{2}\partial_x^2 u_1 &= 0\\
	u_1(0,x) &= u_0(x) >0.
	\end{aligned}\right.
	\end{equation}
	\item For any given $u_k$, we define the approximate barrier function $b_k :=b_k(t)$ by
	\begin{equation}\label{e:AppSolnDefnforb}
	-\int_{-\infty}^{b_k(t)} u_k (t,x) \;dx = G'(t).
	\end{equation}
	\item For any given $b_k$, we define $u_{k+1} := u_{k+1} (t,x) \in C([0,T];H^1(\R))\cap L^2 ([0,T];H^2(\R))$ as the unique weak solution to the parabolic equation
	\begin{equation}\label{e:AppSolnDefnforu}
	\left\{\begin{aligned}
	\partial_t u_{k+1} -\frac{1}{2}\partial_x^2 u_{k+1} &= - \1_{(-\infty,b_k(t)]} u_{k+1}\\
	u_{k+1}(0,x) &= u_0(x) >0.
	\end{aligned}\right.
	\end{equation}
\end{enumerate}
Regarding this iterative scheme, one may ask whether the sequence of approximate solutions can be defined iteratively for any given initial data $u_0>0$ and $G$ that satisfies the compatibility conditions \eqref{e:CompatibilityConditionforG} and \eqref{e:InitialCompatibilityConditionforG}. The answer is affirmative because of the following result.

\begin{prop}[Solvability of Approximate Scheme]\label{prop:SolvabilityofApproximateScheme}
For any fixed $T>0$, let $G\in C^1([0,T])$ and $u_0 \in H^2(\R)$ satisfy the compatibility conditions \eqref{e:CompatibilityConditionforG} and \eqref{e:InitialCompatibilityConditionforG}. Assume that $u_0>0$ on $\R$. Then there exists a unique sequence $\{(u_k,b_k)\}_{k=1}^\infty \in C([0,T];H^1(\R))\cap L^2 ([0,T];H^2(\R)) \times C([0,T])$ that satisfies \eqref{e:AppSolnInit}, \eqref{e:AppSolnDefnforb} and \eqref{e:AppSolnDefnforu} in the weak sense.

Furthermore, the estimate~\eqref{estLinftyH1&L2H2} holds for all solutions $u_k$, and the sequence $\{(u_k,b_k)\}_{k=1}^\infty$ has the following monotonicity property: for any integer $k\ge 1$,
\begin{equation}\label{e:MonotonicityforAppSoln}
\left\{\begin{aligned}
0 < f_d \leq u_{k+1} &\le u_k \\
b_{k+1} &\ge b_k,
\end{aligned}\right.
\end{equation}
where the positive function $f_d$ is defined in \eqref{e:u_geq_f_d}.
\end{prop}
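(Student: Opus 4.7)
My plan is to build the sequence $(u_k,b_k)$ by induction on $k$, exploiting the asymmetry between the two halves of the iteration. Given a continuous $b_k$, the existence, uniqueness, regularity, and energy estimate \eqref{estLinftyH1&L2H2} for $u_{k+1}$ are immediate consequences of Claim \ref{cl:Existence&Uniqueness_of_Linear_Parabolic_Eqt} applied with $c:=b_k$, and the strict positivity $u_{k+1}\ge f_d>0$ follows from Claim \ref{fd<u<fh}. Thus the substantive work is (a) producing a continuous $b_k$ from $u_k$ via \eqref{e:AppSolnDefnforb}, and (b) propagating the monotonicity $u_{k+1}\le u_k$ together with $b_{k+1}\ge b_k$.

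\textbf{Existence and continuity of $b_k$.} At each stage I need the total mass $M_k(t):=\int_\R u_k(t,x)\,dx$ to satisfy $M_k(t)>-G'(t)$. Because $u_k\ge f_d>0$, the map $b\mapsto \int_{-\infty}^b u_k(t,x)\,dx$ is continuous and strictly increasing from $0$ to $M_k(t)$, so the compatibility bound $-G'(t)\in(0,M_k(t))$ yields a unique $b_k(t)$ obeying \eqref{e:AppSolnDefnforb}. Continuity of $b_k$ in $t$ follows from the implicit function theorem: the $b$-derivative $u_k(t,b)$ is strictly positive, and $u_k\in C([0,T];H^1(\R))$ embeds into $C([0,T];C_b(\R))$ by the Agmon inequality from Lemma \ref{inter}, while $G'\in C([0,T])$. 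For the base case $k=1$, the heat equation solution $u_1$ satisfies $u_1>0$ and $M_1\equiv G(0)=1$, and the compatibility condition ensures $-G'(t)<G(t)\le G(0)=1$ (using that $G$ is non-increasing).

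\textbf{Monotonicity via comparison.} I will verify $u_{k+1}\le u_k$ through a parabolic maximum principle. For $k\ge 2$, setting $w:=u_k-u_{k+1}$ and using the inductive hypothesis $b_{k-1}\le b_k$ yields
\[
\partial_t w - \tfrac12\partial_x^2 w + \1_{(-\infty,b_{k-1}(t)]}\, w \;=\; \1_{(b_{k-1}(t),\,b_k(t)]}\, u_{k+1} \;\ge\; 0, \qquad w(0,\cdot)\equiv 0,
\]
so $w\ge 0$. For the base case $k=1$ the analogous calculation reduces to $\partial_t w-\tfrac12\partial_x^2 w=\1_{(-\infty,b_1(t)]}u_2\ge 0$ with $w(0,\cdot)\equiv 0$, again yielding $w\ge 0$. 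With this in hand, $b_{k+1}\ge b_k$ follows at once: since
\[
\int_{-\infty}^{b_{k+1}(t)} u_{k+1} \;=\; -G'(t) \;=\; \int_{-\infty}^{b_k(t)} u_k \;\ge\; \int_{-\infty}^{b_k(t)} u_{k+1},
\]
and $b\mapsto \int_{-\infty}^b u_{k+1}$ is strictly increasing, we conclude $b_{k+1}\ge b_k$.

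\textbf{Closing the induction and anticipated obstacle.} To pass to the next step I must verify the mass lower bound $M_{k+1}(t)>-G'(t)$. Integrating \eqref{e:AppSolnDefnforu} over $\R$ and using $u_{k+1}\le u_k$ together with the defining equation for $b_k$ gives
\[
M_{k+1}(t) \;=\; 1 \;-\; \int_0^t\!\!\int_{-\infty}^{b_k(s)} u_{k+1}(s,x)\,dx\,ds \;\ge\; 1 \;+\; \int_0^t G'(s)\,ds \;=\; G(t) \;>\; -G'(t),
\]
closing the induction. The principal technical obstacle is justifying both the comparison argument and the mass balance rigorously at the level of weak solutions, because of the discontinuity of $\1_{(-\infty,b_k(t)]}$; this should be handled through the mollification procedure described in the Remark following Claim \ref{cl:Existence&Uniqueness_of_Linear_Parabolic_Eqt}, which is standard but requires care.
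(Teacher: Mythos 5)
Your proposal is correct and follows essentially the same approach as the paper: induction, existence and positivity of $u_{k+1}$ via Claims \ref{cl:Existence&Uniqueness_of_Linear_Parabolic_Eqt} and \ref{fd<u<fh}, monotonicity $u_{k+1}\leq u_k$ via the parabolic comparison principle (which the paper justifies rigorously for weak solutions in Proposition \ref{prop:Comparison_Principle}), the mass bound $M_{k+1}\geq G>-G'$ to define $b_{k+1}$, and then $b_{k+1}\geq b_k$ from the ordering of $u_k$ and $u_{k+1}$. Your handling of the base case $k=1$ of the monotonicity step (where $u_1$ obeys the unkilled heat equation) is a small extra detail that the paper leaves implicit, but everything else lines up.
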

\begin{proof}
	The existence proof is based on a monotonicity argument and mathematical induction.
	
	It follows directly from the standard theory for the homogeneous heat equation that
	\begin{enumerate}[(i)]
		\item we can always define a unique positive solution $u_1$ in $C([0,T];H^1(\R))\cap L^2 ([0,T];H^2(\R))$ via the explicit solution formula for the homogeneous heat equation;
		\item for all times $t\geq 0$,
		\begin{equation}\label{e:MassConservationforu_1}
		\int_{-\infty}^{\infty} u_1 (t,x) \;dx = \int_{-\infty}^{\infty} u_0 (x) \;dx.
    	\end{equation}
	\end{enumerate}
	According to the identity \eqref{e:MassConservationforu_1} and conditions \eqref{e:InitialCompatibilityConditionforG} and \eqref{e:CompatibilityConditionforG}, we have, for any $t\ge 0$,
	\[
	\int_{-\infty}^{\infty} u_1 (t,x) \;dx = \int_{-\infty}^{\infty} u_0 (x) \;dx = G(0) \ge G(t) > -G'(t),
	\]
	and hence, we can always define $b_1:=b_1 (t)$ via the identity \eqref{e:AppSolnDefnforb} by using the implicit function theorem.
	
	Given that we have already constructed $(u_k,b_k) \in C([0,T];H^1(\R))\cap L^2 ([0,T];H^2(\R)) \times C([0,T])$, we can construct $(u_{k+1},b_{k+1})$ as follows.
	
	First of all, according to Claim~\ref{cl:Existence&Uniqueness_of_Linear_Parabolic_Eqt}, we can always define $u_{k+1}$ via solving the initial value problem~\eqref{e:AppSolnDefnforu} provided that $b_k$ is known. In addition, $u_{k+1}\geq f_d > 0$ according to Claim~\ref{cl:posu0}.
	
	Now, using the fact that $b_k(t)\geq b_{k-1}(t)$ for all $t\in [0,T]$, we can show the monotonicity $u_{k+1}\leq u_k$ by the maximum principle. Formally, since $b_k(t)\geq b_{k-1}(t)$ and $u_{k+1}>0$,
	\[\begin{aligned}
	\left(\partial_t -\frac{1}{2} \partial_{xx} + \1_{(-\infty,b_{k-1}]} \right) (u_{k+1} - u_k) &= -\1_{(-\infty,b_k]} u_{k+1} + \1_{(-\infty,b_{k-1}]} u_{k+1} \\
	&=  - \1_{(b_{k-1},b_k]} u_{k+1} \leq 0.
	\end{aligned}\]
	As a result, if both $u_k$ and $u_{k+1}$ were classical solutions, one would be able to prove the monotonicity $u_{k+1}\leq u_k$ directly by using the standard maximum principle. However, both $u_k$ and $u_{k+1}$ are just weak solutions instead of classical solutions, so we have to modify the proof of the comparison principle by using the standard duality argument. The proof will be provided in Proposition~\ref{prop:Comparison_Principle} for readers' convenience.
	
	Having defined $u_{k+1}$, we can define $b_{k+1}$ as follows. For any fixed time $t\in [0,T]$ define $b_{k+1}(t)$ as the unique $\beta\in\R$ such that
	\[
	-\int_{-\infty}^{\beta} u_{k+1} (t,x) \;dx = G'(t).
	\]
	In other words, $\beta$ is a zero of the continuous function
	\[
	F(\alpha ,t) := G'(t) + \int_{-\infty}^\alpha u_{k+1} (t,x) \;dx.
	\]
	Since $u_{k+1}$ is positive, $F(\cdot ,t)$ is injective. Due to the compatibility condition~\eqref{e:CompatibilityConditionforG}, we know that
	\[
	\lim_{\alpha\to -\infty} F(\alpha ,t) = G'(t) < 0.
	\]
	Therefore, we can always find the unique zero of $F(\cdot ,t)$ by the implicit function theorem for continuous functions provided that
	\begin{equation}\label{e:limF>0_as_alpha_to_infty}
	\lim_{\alpha\to \infty} F(\alpha ,t) > 0.
	\end{equation}
	As a result, in order to show that $b_{k+1}$ is well-defined, it suffices to prove
	\begin{equation}\label{e:intu_k+1geqG}
	\int_{-\infty}^\infty u_{k+1} (t,x) \;dx \geq G(t)
	\end{equation}
	for all $t\in [0,T]$ because \eqref{e:CompatibilityConditionforG} and \eqref{e:intu_k+1geqG} imply \eqref{e:limF>0_as_alpha_to_infty}.
	
	Formally, one can show \eqref{e:intu_k+1geqG} very easily by using the equation~$\eqref{e:AppSolnDefnforu}$ and the monotonicity $u_k \geq u_{k+1} > 0$. A direct computation yields
	\[\begin{aligned}
	\dfrac{d}{dt} \int_{-\infty}^{\infty} u_{k+1} (t,x) \;dx &= -\int_{-\infty}^{b_k (t)} u_{k+1} (t,x) \;dx \\
	& \geq -\int_{-\infty}^{b_k (t)} u_{k} (t,x) \;dx = G'(t),
	\end{aligned}\]
	where the last equality follows from the definition of $b_k$. Integration implies \eqref{e:intu_k+1geqG} because of the initial compatibility condition \eqref{e:InitialCompatibilityConditionforG}. However, this argument is not rigorous because $u_{k+1}$ is not a classical solution to \eqref{e:AppSolnDefnforu}. The rigorous way to show \eqref{e:intu_k+1geqG} is to justify the above argument by using the standard test function technique. More precisely, one can choose a sequence of test functions that approximate the indicator/characteristic function of $[0,t]\times\R$. Applying these test functions to the definition of weak solution, and passing to the limit appropriately, one will obtain an integral version of the above argument, and this justifies \eqref{e:intu_k+1geqG}. 
	
	Finally, we can also prove the monotonicity $b_{k+1} \geq b_k$ by using the monotonicity $u_k \geq u_{k+1} > 0 $. More precisely, since $u_k \geq u_{k+1} > 0 $, it follows from the definitions of $b_k$ and $b_{k+1}$ that
	\[
	\int_{-\infty}^{b_{k+1}(t)} u_{k+1}(t,x) \;dx = -G'(t) = \int_{-\infty}^{b_{k}(t)} u_{k}(t,x) \;dx \geq \int_{-\infty}^{b_{k}(t)} u_{k+1}(t,x) \;dx,
	\]
	and hence,
	\[
	b_{k+1} (t) \geq b_k (t) \quad\mbox{for all $t\in [0,T]$},
	\]
	since $u_{k+1} > 0 $. This completes the proof of Proposition~\ref{prop:SolvabilityofApproximateScheme}.
\end{proof}

\section{Existence and Uniqueness}\label{s:existence}
In this section we will first show the existence of solutions stated in Theorem \ref{thm_exist&unique} by proving the convergence of the iterative sequence that was constructed in Section \ref{s:Scheme}. The convergence of approximate solutions and consistency of the limit of the sequence will be shown in Sections~\ref{ss:convergence} and \ref{ss:consistency} respectively. We show in Section \ref{s:prob} that the solution of the PDE gives the correct probabilistic interpretation. Finally, we will prove uniqueness in Section~\ref{ss:uniqueness} and put all the pieces together in Section \ref{s:IFPTK}.

\subsection{Convergence of the iterative scheme}\label{ss:convergence}

In this section we will prove that the sequence $\{(u_k,b_k)\}_{k=1}^\infty$ of approximate solutions converges uniformly to the limit $(\tilde{u},\tilde{b})$.

According to Proposition~\ref{prop:SolvabilityofApproximateScheme}, we know that the sequence $\{ u_k \}_{k=1}^\infty$ of approximate solutions is uniformly bounded in $C([0,T];H^1(\R))\cap L^2 ([0,T];H^2(\R))$ because the estimate~\eqref{estLinftyH1&L2H2} holds for the approximate sequence $\{ u_k \}_{k=1}^\infty$ as well. In addition, by using the evolution equation~$\eqref{e:AppSolnDefnforu}$, we know that the sequence $\{ \partial_t u_k \}_{k=1}^\infty$ is also uniformly bounded in $L^2 ([0,T];L^2(\R))$.

As a result, by the Banach-Alaoglu theorem, there exist a subsequence $\{ u_{k_j} \}_{j=1}^\infty$ and a function $\tilde{u}\in L^\infty([0,T];H^1(\R))\cap L^2 ([0,T];H^2(\R))$ with $\partial_t\tilde{u}\in L^2 ([0,T];L^2(\R))$ such that
\[
\left\{
\begin{aligned}
u_{k_j} &\rightarrow \tilde{u} &&a.e. \\
u_{k_j} &\rightharpoonup \tilde{u} && \mbox{in $L^2 ([0,T];H^2(\R))$} \\
\partial_tu_{k_j} &\rightharpoonup \partial_t\tilde{u} && \mbox{in $L^2 ([0,T];L^2(\R))$},
\end{aligned}
\right.
\]
as $j\to\infty$. This implies that $\tilde{u}\in C([0,T];H^1(\R))$, and hence, that $\tilde u$ is continuous on $[0,T]\times\R$.

On the other hand, it follows from the monotonicity~\eqref{e:MonotonicityforAppSoln} that the whole sequence (instead of the subsequence) of continuous functions $\{ u_k \}_{k=1}^\infty$ actually converges pointwise in $[0,T]\times\R$ because they are uniformly bounded below by $f_d$ according to Claim~\ref{fd<u<fh}. Due to the uniqueness of the pointwise limit we have that for any $(t,x)\in [0,T]\times\R$,
\[
\tilde{u} (t,x) = \lim\limits_{k\to\infty} u_k (t,x).
\]
According to Dini's theorem, the above convergence is uniform on any compact subset of $[0,T]\times\R$ due to the continuity of $\tilde{u}$ and the monotonicity~\eqref{e:MonotonicityforAppSoln}.

Furthermore, using the monotonicity~\eqref{e:MonotonicityforAppSoln} and Claim~\ref{cl:posu0}, we have, for any positive integer $k$ and any $(t,x)\in [0,T]\times\R$,
\[
0<f_d(t,x)\leq\tilde{u} (t,x) \leq u_k (t,x)\leq f_h(t,x),
\]
where $f_h(t,x)\in L^1(\R)$ for all time $t\geq 0$. Since $\tilde{u}$ is a pointwise limit of $\{u_k\}_{k=1}^\infty$, using Lebesgue's dominated convergence theorem, we can pass to the limit $k\to\infty$ in \eqref{e:intu_k+1geqG}, and obtain
\begin{equation}\label{e:inttildeugeqG}
\int_{-\infty}^\infty \tilde{u} (t,x) \;dx \geq G(t).
\end{equation}
It follows from the positivity of $\tilde{u}$, inequality~\eqref{e:inttildeugeqG} and compatibility condition~\eqref{e:CompatibilityConditionforG} that we can always define a unique continuous function $\tilde{b}:=\tilde{b}(t)$ via
\[
\int_{-\infty}^{\tilde{b}(t)} \tilde{u}(t,x) \;dx = - G'(t)
\]
by using the implicit function theorem.

Using the fact that $0<\tilde{u}\leq u_k$, we have
\[
\int_{-\infty}^{\tilde{b}(t)}\tilde{u}(t,x) \;dx = -G'(t) = \int_{-\infty}^{b_{k}(t)} u_{k}(t,x) \;dx \geq \int_{-\infty}^{b_{k}(t)} \tilde{u}(t,x) \;dx,
\]
and hence,
\[
b_k\leq \tilde{b}.
\]
By the definitions of $b_k$ and $\tilde{b}$ and Lebesgue's dominated convergence theorem, we obtain
\[
\int_{b_k}^{\tilde{b}} \tilde{u} \; dx = \int_{-\infty}^{b_k} u_k - \tilde{u} \; dx \leq \int_{-\infty}^{\infty} u_k - \tilde{u} \; dx \to 0,
\]
as $k\to\infty$. Since $\tilde{u} > 0$, we have
\[
\lim_{k\to\infty} b_k(t) = \tilde{b}(t) , \quad\mbox{for all $t\in [0,T]$.}
\]
Since $\tilde{b}$ is continuous, the above convergence is actually uniform according to Dini's theorem.

\subsection{Consistency of the limit}\label{ss:consistency}

In this section we will show that the limit $(\tilde{u},\tilde{b})$, which was constructed in the subsection~\ref{ss:convergence}, is a weak solution to the problem~\eqref{e:MainSystem}-\eqref{e:MassIdentity} in the sense of Definition~\ref{def:WeakSoln}.

Note that for any $k\geq 2$, $(u_{k+1},b_k)$ is a weak solution to the approximate system~\eqref{e:AppSolnDefnforu}, and hence, we have, for any test function $\phi\in C^\infty_c ([0,T]\times\R)$,
\begin{equation}\label{e:DistributionalApproxSoln}
\begin{aligned}
\int_0^T \int_{-\infty}^\infty u_{k+1} \partial_t \phi \;dxdt = \;& \int_{-\infty}^\infty u_{k+1} \phi |_{t=T} \;dx - \int_{-\infty}^\infty u_0 \phi |_{t=0} \;dx \\
& - \frac{1}{2} \int_0^T \int_{-\infty}^\infty u_{k+1} \partial_x^2 \phi \;dxdt + \int_0^T \int_{-\infty}^\infty \1_{(-\infty,b_k(t)]} u_{k+1} \phi \;dxdt.
\end{aligned}
\end{equation}
Since both the sequences $\{u_k\}_{k=1}^\infty$ and $\{b_k\}_{k=1}^\infty$ converge uniformly on the support of $\phi$, we can pass to the limit $k\to\infty$ in \eqref{e:DistributionalApproxSoln}, and show that $(\tilde{u},\tilde{b})$ satisfies the integral identity \eqref{e:DistributionalSoln}. The convergence of the last term on the right hand side of \eqref{e:DistributionalApproxSoln} is guaranteed by Lebesgue's dominated convergence theorem and the fact that $0<f_d\leq u_k\leq f_h$ for all positive integer $k$, where $f_h$ is the $L^1$ function given in Claim~\ref{cl:posu0}.

We next show that the limit $\tilde{u}$ satisfies the mass identity \eqref{e:MassIdentity}. Since $|\1_{(-\infty,b_k(t)]} u_{k}|\leq f_h \in L^1(\R)$ for any time $t\in [0,T]$, we can apply Lebesgue's dominated convergence theorem to \eqref{e:AppSolnDefnforb} to obtain the following identity: for any $t\in [0,T]$,
\begin{equation}\label{e:Relation_of_tildeb&G'}
\int_{-\infty}^{\tilde{b}(t)} \tilde{u} (t,x) \;dx = - G'(t).
\end{equation}
Integrating \eqref{e:Relation_of_tildeb&G'} with respect to the time $t$, we also have, for any time $t\in [0,T]$,
\begin{equation}\label{e:Relation_of_tildeu&tildeb&G}
\int_{0}^{t} \int_{-\infty}^{\tilde{b}(s)} \tilde{u} (s,x) \;dxds = G(0)-G(t).
\end{equation}

On the other hand, we have already shown that $(\tilde{u},\tilde{b})$ satisfies the integral identity \eqref{e:DistributionalSoln} for any arbitrary test function $\phi\in C^\infty_c ([0,T]\times\R)$. Applying \eqref{e:DistributionalSoln} with a sequence of test functions that approximate the indicator/characteristic function of $[0,t]\times\R$, one can prove that
\begin{equation}\label{e:-int_-infty^inftytildeudx}
\begin{aligned}
-\int_{-\infty}^{\infty} \tilde{u} (t,x) \;dx &= -\int_{-\infty}^{\infty} u_0 (t,x) \;dx + \int_{0}^{t} \int_{-\infty}^{\tilde{b}(s)} \tilde{u} (s,x) \; dxds.
\end{aligned}
\end{equation}
The identity~\eqref{e:Relation_of_tildeu&tildeb&G} and the initial compatibility condition \eqref{e:InitialCompatibilityConditionforG} combined with \eqref{e:-int_-infty^inftytildeudx} imply that $\tilde{u}$ satisfies the mass identity~\eqref{e:MassIdentity}.

\subsection{Probabilistic Interpretation}\label{s:prob}

We have shown that for any $T>0$ there exists $u\in C([0,T];H^1(\R))\cap L^2([0,T];H^2(\R))$ and $b\in C([0,T])$ such that
\begin{equation}\label{e:sol}
\begin{split}
\partial_t u(t,x) &= \frac{1}{2} \partial_{xx} u(t,x) - u(t,x)\1_{(-\infty,0]} (x-b(t)), \\
u(0,x)&= f(x)>0, \quad\mbox{for all } x\in \R,\\
\int_\R u(t,x)\,dx &= G(t), \quad\mbox{for all } t\in [0,T].
\end{split}
\end{equation}
in the weak sense of Definition \ref{def:WeakSoln}.

We want to show that $u(t,x)$ and $b(t)$ give the correct probabilistic interpretation. Our solutions do not have enough regularity and the killing rate is discontinuous and as such we cannot use the classical Feynman-Kac formula. Instead, we will make use of the recent result \cite[Theorem 3.4]{G16}.

\begin{theorem}\label{t:prob}
Suppose that for some $T>0$, we have $u\in C([0,T];H^1(\R))\cap L^2([0,T];H^2(\R))$, $b\in C([0,T])$ and \eqref{e:sol} is satisfied in the weak sense. Assume furthermore that $u_0=f\in L^2(\R)$ with $\int_\R f(y)\,dy=1$. Then for a.e. $x\in\R$,
\begin{equation}\label{e:FK}
u(t,x) = \E \left[ f(x+B_t) \exp\left(- \int_0^t \1_{(-\infty,0]}(x+B_{t-s}-b(s))\,ds\right)\right], ~~t\in[0,T]
\end{equation}
and as a result,
\begin{equation}\label{e:IFPT}
G(t) = \int_\R \E \left[\exp\left(- \int_0^t \1_{(-\infty,0]}(x+B_{s}-b(s))\,ds\right)\right] f(x) \,dx, ~~t\in[0,T]
\end{equation}

\end{theorem}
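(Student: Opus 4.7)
The plan is to appeal directly to the Feynman--Kac representation theorem of Glau \cite[Theorem 3.4]{G16}, which is tailored to weak parabolic solutions with merely Borel-measurable, bounded potentials. The classical Feynman--Kac theorem fails here because our killing rate $V(t,x) := \1_{(-\infty,0]}(x-b(t))$ is discontinuous across the curve $x=b(t)$; Glau's result is designed precisely to handle this obstruction by working in a Gelfand-triple / Dirichlet-form framework rather than demanding a classical solution.

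The first step is to identify the ingredients of Glau's theorem in our setting. The generator $-\tfrac{1}{2}\partial_x^2$ defines a symmetric closed (Dirichlet) form on $H^1(\R)$ and is the $L^2$-generator of standard Brownian motion. The time-dependent multiplicative perturbation $V$ is Borel-measurable with $0\leq V\leq 1$, and the initial datum $f=u_0\in H^2(\R)\subset L^2(\R)$. The equation itself forces $\partial_t u\in L^2([0,T];L^2(\R))$, so $(u,V)$ fits Glau's solution class. The second step is to verify that our distributional solution in the sense of Definition~\ref{def:WeakSoln} coincides with the variational solution used by Glau---this amounts to unfolding the test-function identity~\eqref{e:DistributionalSoln} as the Gelfand-triple identity
\[
\la \partial_t u(t), v\ra_{L^2} + \tfrac{1}{2} \la \partial_x u(t), \partial_x v\ra_{L^2} + \la V(t,\cdot) u(t), v\ra_{L^2} = 0
\]
for all $v\in H^1(\R)$ and a.e.\ $t\in[0,T]$, which is standard for parabolic equations with bounded zeroth-order potentials. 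Glau's theorem then yields \eqref{e:FK} for a.e.\ $x\in\R$.

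To obtain \eqref{e:IFPT} from \eqref{e:FK}, I would integrate in $x$ over $\R$ and apply Fubini's theorem, which is legitimate because the exponential factor lies in $[0,1]$ and $f\in L^1(\R)$ (since $\int f\,dx=G(0)=1$ by the initial compatibility condition). The substitution $y:=x+B_t$ inside the expectation (for each fixed Brownian path) converts $x+B_{t-s}$ into $y-(B_t-B_{t-s})$. The time-reversed process $\tilde B_s:=B_t-B_{t-s}$, $s\in[0,t]$, is a standard Brownian motion, and by the reflection symmetry $-\tilde B\stackrel{d}{=}\tilde B$ the expectation is unchanged when $-\tilde B_s$ is replaced by $B_s$. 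A second application of Fubini then produces \eqref{e:IFPT}.

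The main obstacle I expect is step two, namely, reconciling Definition~\ref{def:WeakSoln} with the abstract variational framework of \cite{G16}. This is essentially bookkeeping---identifying the Gelfand triple $(H^1,L^2,H^{-1})$, checking closedness and coercivity of the form associated to $-\tfrac{1}{2}\partial_x^2$, and confirming that a bounded time-dependent $L^\infty$ multiplier qualifies as an admissible perturbation in Glau's hypotheses---but it must be carried out carefully because the weak-solution definitions in the two sources are not verbatim identical. Everything downstream is either routine measure-theoretic manipulation or direct probabilistic symmetry.
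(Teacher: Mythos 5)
Your proposal follows exactly the paper's approach: invoke Glau's Feynman--Kac theorem \cite[Theorem 3.4]{G16} to obtain \eqref{e:FK} from the weak formulation, then derive \eqref{e:IFPT} via Fubini, the change of variables $y=x+B_t$, and time-reversal together with reflection of the Brownian path. The only cosmetic difference is that the paper verifies Glau's hypotheses explicitly in the notation of \cite{G16} (weight $\eta=0$, symbol $A(\xi)=\tfrac{1}{2}\xi^2$, killing rate $\kappa(t,x)=\1_{(-\infty,0]}(x-b(t))$, conditions (A1)--(A4), and the support condition for $B_t$), whereas you describe the same verification in Gelfand-triple/Dirichlet-form language; the substance is identical.
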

\begin{proof}
It is worth noting that our solutions are more regular than the requirement stated in \cite{G16}, for example $C([0,T];H^1(\R))\cap L^2([0,T];H^2(\R))\subset W^1((0,T; H^1(\R),L^2(\R))$.
In the notation from \cite{G16} we have
\begin{itemize}
\item Weight $\eta=0$.
\item Symbol $A(\xi)=\frac{1}{2} \xi^2$.
\item Killing rate $\kappa(t,x) = 1_{(-\infty,0]} (x-b(t))$.
\end{itemize}
It is easy to check that the symbol $A$ satisfies \cite[Conditions 3.2]{G16}. In particular, conditions (A1) and (A4) are trivial, and conditions (A2) and (A3) hold for $\alpha=2$. Furthermore, the interior of the set $\{x\in\R ~|~\Pr\{|B_t-x|<\varepsilon\}>0~\text{for all}~\varepsilon>0\}$ is clearly $\R$. As a result, in the notation of \cite{G16}, we have $\text{supp}(B_t)=\R$, for all $t>0. $

Applying \cite[Theorem 3.4]{G16} yields that for a.e. $x\in\R$,
\[
u(t,x) = \E \left[ f(x+B_t) \exp\left(- \int_0^t \1_{(-\infty,0]}(x+B_{t-s}-b(s))\,ds\right)\right], ~~t\in[0,T].
\]
By time-reversal for the Brownian motion $(B_t)_{t\geq 0}$, we obtain
\begin{equation*}
\begin{split}
G(t) &= \int_\R \E \left[ f(x+B_t) \exp\left(- \int_0^t \1_{(-\infty,0]}(x+B_{t-s}-b(s))\,ds\right)\right] \,dx\\
&= \E\int_\R  \left[ f(x+B_t) \exp\left(- \int_0^t \1_{(-\infty,0]}(x+B_{t-s}-b(s))\,ds\right)\right] \,dx\\
&= \int_\R \E \left[\exp\left(- \int_0^t \1_{(-\infty,0]}(x+B_{s}-b(s))\,ds\right)\right] f(x) \,dx ,
\end{split}
\end{equation*}
which completes the proof.

\end{proof}
The following result shows that any weak solution of the problem~\eqref{e:MainSystem}-\eqref{e:MassIdentity}  needs to satisfy the compatibility conditions \eqref{e:CompatibilityConditionforG}-\eqref{e:InitialCompatibilityConditionforG'}.
\begin{lemma}\label{lem:CompatibilityCondition2}
Suppose that for some $T>0$, we have $b\in C([0,T])$, $u\in W^1((0,T; H^1(\R),L^2(\R))$ and \eqref{e:sol} is satisfied in the weak sense. Assume furthermore that $u_0=f\in L^2(\R)$ with $\int_\R f(y)\,dy=1$ and $f>0$.
For any $t\in [0,T]$, we have
\begin{equation}\label{e:comp1}
G'(t) = -\int_{-\infty}^{b(t)} u(t,x)\,dx
\end{equation}
and
\begin{equation}\label{e:comp2}
0 < -G'(t) < G(t).
\end{equation}
\end{lemma}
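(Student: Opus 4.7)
The identity \eqref{e:comp1} is the rigorous version of the formal calculation
\[
G'(t) = \frac{d}{dt}\int_\R u(t,x)\,dx = \int_\R \partial_t u\,dx = \int_\R \left(\tfrac12\partial_x^2 u - \1_{(-\infty,b(t)]}u\right)dx = -\int_{-\infty}^{b(t)} u(t,x)\,dx,
\]
while \eqref{e:comp2} will follow once strict positivity of $u$ is known. My plan is to test the weak formulation against a smooth approximation of $\1_{[0,t]\times\R}$, deduce an integrated version, differentiate in $t$, and finally invoke Claim \ref{fd<u<fh} for the two strict inequalities.

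\textbf{Step 1 (integrated identity).} I would plug $\phi_{\epsilon,R}(s,x)=\chi_\epsilon(s)\,\eta_R(x)$ into \eqref{e:DistributionalSoln}, where $\chi_\epsilon\in C^\infty_c([0,T))$ approximates $\1_{[0,t]}$ and $\eta_R\in C^\infty_c(\R)$ equals $1$ on $[-R,R]$ with $\eta_R''$ supported in $\{|x|\geq R\}$. Passing $\epsilon\to 0$ is standard and isolates the mass differences $\int u(t,\cdot)\eta_R\,dx-\int u_0\eta_R\,dx$ together with the time-integrated spatial terms. To send $R\to\infty$ I use that $u(s,\cdot)\in H^2(\R)$ for a.e.\ $s$ (the PDE upgrades the spatial regularity since $\partial_t u\in L^2$ and $\1_{(-\infty,b(s)]}u\in L^2$), so that one may integrate by parts twice to rewrite $\int u(s,\cdot)\eta_R''\,dx=\int \partial_x^2 u(s,\cdot)\,\eta_R\,dx$ with vanishing boundary contributions by the one-dimensional Sobolev embedding $H^1(\R)\hookrightarrow C_0(\R)$ applied to $\partial_x u(s,\cdot)$; the right-hand side then converges to $\int_\R \partial_x^2 u(s,\cdot)\,dx=0$. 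For the remaining two terms I invoke the Feynman--Kac bound $0<u(s,x)\leq (p_s*f)(x)$ from Theorem \ref{t:prob} as an $L^1$-dominant, so that dominated convergence yields
\[
\int_\R u(t,x)\,dx - \int_\R u_0(x)\,dx \;=\; -\int_0^t \int_{-\infty}^{b(s)} u(s,x)\,dx\,ds.
\]

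\textbf{Step 2 (differentiation and positivity).} Since $b\in C([0,T])$ and the uniform $L^1$ bound from Step 1 combines with the $C([0,T];L^2)$ regularity of $u$ to make $s\mapsto \int_{-\infty}^{b(s)} u(s,x)\,dx$ continuous on $[0,T]$, the right-hand side of the integrated identity is a $C^1$ function of $t$. Differentiating and using $G(t)=\int_\R u(t,x)\,dx$ (with $G(0)=1$) yields \eqref{e:comp1}. For \eqref{e:comp2}, Claim \ref{fd<u<fh} applied to the linear equation \eqref{e_cauchy} with $c=b$ gives the strict lower bound $u(t,x)\geq f_d(t,x)>0$ on $(0,T]\times\R$. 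Hence $\int_{-\infty}^{b(t)} u(t,x)\,dx>0$ and $\int_{b(t)}^{\infty} u(t,x)\,dx>0$, which combined with \eqref{e:comp1} and the mass identity translates into $0<-G'(t)<G(t)$.

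\textbf{Expected obstacle.} The genuine technical point is the passage $R\to\infty$ in the $\partial_x^2\eta_R$ term: with only the regularity stipulated by the weak formulation, one must show that the ``boundary at infinity'' contribution vanishes. This is resolved by bootstrapping to $H^2$-regularity via the equation itself and then applying the one-dimensional Sobolev embedding to $\partial_x u(s,\cdot)$; everything else is a routine approximation argument with $L^1$-dominants supplied by Theorem \ref{t:prob}.
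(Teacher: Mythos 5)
Your route is genuinely different from the paper's. The paper proves Lemma~\ref{lem:CompatibilityCondition2} \emph{probabilistically}: it first invokes Theorem~\ref{t:prob} to obtain the Feynman--Kac representation of $u$, then differentiates $G(t)=\int_\R\E[\,\cdots]f\,dx$ under the expectation and uses time-reversal of Brownian motion to massage the result into $-\int_{-\infty}^{b(t)}u(t,x)\,dx$; positivity of $u$ (and hence \eqref{e:comp2}) comes for free from the Feynman--Kac formula since $f>0$ and the exponential weight is positive. Your proof instead tests the distributional formulation against approximations of $\1_{[0,t]\times\R}$, derives the integrated mass balance, and differentiates --- exactly the PDE technique the paper itself deploys in Section~\ref{ss:consistency} to obtain \eqref{e:-int_-infty^inftytildeudx}, but does not reuse here. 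Each approach has its merits: yours avoids invoking the Feynman--Kac theorem of \cite{G16} (except for the $L^1$-dominant, which you could equally well replace by the heat-kernel bound $u\le f_h$ from Claim~\ref{fd<u<fh}, keeping the argument purely PDE-theoretic), while the paper's is shorter given that Theorem~\ref{t:prob} is already established and delivers positivity instantaneously. One caveat in your Step~1: under the $W^1((0,T);H^1,L^2)$ regularity stated in the lemma one only has $\partial_t u\in L^2((0,T);H^{-1})$, so the claimed bootstrap to $u(s,\cdot)\in H^2(\R)$ for a.e.\ $s$ is circular --- it presumes the very $L^2$-in-time control of $\partial_t u$ that would follow from the $H^2$ regularity you are trying to establish. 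The fix is simple: integrate by parts only once, writing $\int u\,\eta_R''\,dx=-\int\partial_x u\,\eta_R'\,dx$, and then bound $\bigl|\int\partial_x u\,\eta_R'\,dx\bigr|\le\|\partial_x u(s,\cdot)\|_{L^2}\|\eta_R'\|_{L^2}\lesssim R^{-1/2}\|\partial_x u(s,\cdot)\|_{L^2}\to 0$, which only requires the $H^1$ regularity you genuinely have; the rest of your argument then goes through.
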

\begin{proof}
By Theorem \ref{t:prob}, we have, for any $t\in [0,T]$,
\begin{equation*}
\begin{split}
G(t) &= \int_\R \E \left[ f(x+B_t) \exp\left(- \int_0^t \1_{(-\infty,0]}(x+B_{t-s}-b(s))\,ds\right)\right] \,dx\\
&= \int_\R \E \left[\exp\left(- \int_0^t \1_{(-\infty,0]}(x+B_{s}-b(s))\,ds\right)\right] f(x) \,dx.
\end{split}
\end{equation*}
Using the dominated convergence theorem, the time-reversal property for Brownian motion (see \cite{HP86}), and equation \eqref{e:FK}, we have
\begin{equation*}
\begin{split}
G'(t) &= - \E \int_\R \1_{(-\infty,0]}(x+B_{t}-b(t)) \left[\exp\left(- \int_0^t \1_{(-\infty,0]}(x+B_{s}-b(s))\,ds\right)\right] f(x) \,dx\\
&= - \E \int_\R \1_{(-\infty,0]}(x-b(t)) \left[\exp\left(- \int_0^t \1_{(-\infty,0]}(x+B_{t-s}-b(s))\,ds\right)\right] f(x+B_t) \,dx\\
&= - \int_{-\infty}^{b(t)} \E  \left[\exp\left(- \int_0^t \1_{(-\infty,0]}(x+B_{t-s}-b(s))\,ds\right)\right] f(x+B_t) \,dx\\
&= - \int_{-\infty}^{b(t)} u(t,x)\,dx.
\end{split}
\end{equation*}
Note that $u>0$ and $b<\infty$ imply that for all $t\in [0,T]$,
\[
0< -G'(t)=\int_0^{b(t)}u(t,x),dx<\int_0^{\infty}u(t,x),dx = G(t).
\]

\end{proof}

\subsection{Uniqueness}\label{ss:uniqueness}

In this subsection we will prove the uniqueness of the weak solution to \eqref{e:sol} by deriving an $L^1$ estimate for the difference between any two solutions $(u_1,b_1)$ and $(u_2,b_2)$. Technically, we will apply the doubling variables argument to derive the comparison between two weak solutions; see \eqref{e:L^1_Est_for_tilde_u} below. This method is a standard way to estimate the difference between weak solutions; see Kru\v{z}kov's famous work \cite{Kruzkov1970} on scalar conservation laws in several spatial dimensions for instance. The main observation for this proof is identity~\eqref{e:technical_identity_for_a_L^1_norm}, which, in a certain sense, allows us to ``convert'' the difference between $b_1$ and $b_2$ to the difference between $u_1$ and $u_2$. The details of the proof will be provided as follows.

Let $(u_1,b_1)$ and $(u_2,b_2)$ be two weak solutions to the problem~\eqref{e:MainSystem}-\eqref{e:MassIdentity} in the sense of Definition~\ref{def:WeakSoln} with the same initial data $u_0$, and $u_2>0$. For any test function $\psi\in C^\infty_c((0,T)\times\R)$, there exists $\epsilon_0>0$ such that $\supp\;\psi\subset (\epsilon_0,T)\times\R$. For any real numbers $k\in\R$ and $\epsilon\in (0,\epsilon_0)$, we can define $[\sign (u_1 - k) ]_\epsilon := [\sign (u_1 - k) ]*\varphi_\epsilon$, where $\varphi_\epsilon(t,x):=\epsilon^{-1}\varphi(t/\epsilon,x/\epsilon)$ and $\varphi$ is a standard mollifier supported in $(-1,1)^2$. Using $\phi := [\sign (u_1 - k) ]_\epsilon \psi$ as a test function in \eqref{e:DistributionalSoln}, we have 
\begin{equation}\label{e:Integral_Identity_for_u_1-k_with_epsilon}
	\begin{aligned}
	& \int_0^T \int_{-\infty}^\infty ( u_1 - k ) [\sign (u_1 - k) ]_\epsilon \partial_t \psi \;dxdt + \int_0^T \int_{-\infty}^\infty ( u_1 - k ) \psi \partial_t [\sign (u_1 - k) ]_\epsilon \;dxdt \\
	=\;& - \frac{1}{2} \int_0^T \int_{-\infty}^\infty \partial_x^2 ( u_1 - k ) [\sign (u_1 - k) ]_\epsilon \psi \;dxdt \\
	& + \int_0^T \int_{-\infty}^\infty \1_{(-\infty,b_1(t)]} ( u_1 - k ) [\sign (u_1 - k) ]_\epsilon \psi \;dxdt \\
	&+ \int_0^T \int_{-\infty}^\infty \1_{(-\infty,b_1(t)]} k [\sign (u_1 - k) ]_\epsilon \psi \;dxdt,
	\end{aligned}
\end{equation}
since $(u_1,b_1)$ satisfies \eqref{e:DistributionalSoln}. As $\epsilon\to 0^+$,
$[\sign (u_1 - k) ]_\epsilon \to \sign (u_1-k)$ in $L^p_{loc}((0,T)\times\R)$ for all $1\leq p<\infty$. Using this fact and the $C([0,T];H^1(\R))\cap L^2([0,T];H^2(\R))$ regularity of $u_1$, one can pass to the limit $\epsilon\to 0^+$ in \eqref{e:Integral_Identity_for_u_1-k_with_epsilon}, and obtain
\begin{equation}\label{e:Integral_Identity_for_u_1-k}
	\begin{aligned}
	& \int_0^T \int_{-\infty}^\infty | u_1 - k | \partial_t \psi \;dxdt \\
	=\;& - \frac{1}{2} \int_0^T \int_{-\infty}^\infty \partial_x^2 ( u_1 - k ) \sign (u_1 - k)  \psi \;dxdt \\
	& + \int_0^T \int_{-\infty}^\infty \1_{(-\infty,b_1(t)]} | u_1 - k | \psi \;dxdt + \int_0^T \int_{-\infty}^\infty \1_{(-\infty,b_1(t)]} k \sign (u_1 - k) \psi \;dxdt .
	\end{aligned}
\end{equation}
	
For any fixed $(s,y)\in [0,T]\times\R$, we can consider $u_2(s,y)$ as a well-defined constant since $u_2$ is a continuous function on $[0,T]\times\R$. For any test function $\psi:=\psi(t,x,s,y)\in C^\infty_c((0,T)\times\R\times (0,T)\times\R)$, we can first apply $k:=u_2(s,y)$ and $\psi:=\psi(t,x,s,y)$ to \eqref{e:Integral_Identity_for_u_1-k}, and then integrate with respect to $(s,y)$ over $[0,T]\times\R$, to obtain
\begin{equation}\label{e:Integral_Identity_for_u_1-u_2}
	\begin{aligned}
	& \int_0^T \int_{-\infty}^\infty \int_0^T \int_{-\infty}^\infty | u_1(t,x) - u_2(s,y) | \partial_t \psi \;dxdtdyds \\
	=\; & - \frac{1}{2} \int_0^T \int_{-\infty}^\infty \int_0^T \int_{-\infty}^\infty \partial_x^2 ( u_1(t,x) - u_2(s,y) ) \sign (u_1(t,x) - u_2(s,y)) \psi \;dxdtdyds \\
	& + \int_0^T \int_{-\infty}^\infty \int_0^T \int_{-\infty}^\infty \1_{(-\infty,b_1(t)]} (x) | u_1(t,x) - u_2(s,y) | \psi \;dxdtdyds \\
	& + \int_0^T \int_{-\infty}^\infty \int_0^T \int_{-\infty}^\infty \1_{(-\infty,b_1(t)]} (x) u_2(s,y) \sign (u_1(t,x) - u_2(s,y)) \psi \;dxdtdyds.
	\end{aligned}
\end{equation}
\begin{rema}
	It is worth noting that Equation~\eqref{e:Integral_Identity_for_u_1-u_2} is just a consequence of the fact that $(u_1,b_1)$ is a weak solutions to the problem~\eqref{e:MainSystem}-\eqref{e:MassIdentity}. We have not used the fact that $(u_2,b_2)$ is also a weak solution to the problem~\eqref{e:MainSystem}-\eqref{e:MassIdentity} yet.
\end{rema}
	
Now, using the fact that $(u_2,b_2)$ is also a weak solution to the problem~\eqref{e:MainSystem}-\eqref{e:MassIdentity}, one can adapt the above argument, and show that
\begin{equation}\label{e:Integral_Identity_for_u_2-u_1}
	\begin{aligned}
	& \int_0^T \int_{-\infty}^\infty \int_0^T \int_{-\infty}^\infty | u_1(t,x) - u_2(s,y) | \partial_s \psi \;dxdtdyds \\
	=\; & - \frac{1}{2} \int_0^T \int_{-\infty}^\infty \int_0^T \int_{-\infty}^\infty \partial_y^2 ( u_1(t,x) - u_2(s,y) ) \sign (u_1(t,x) - u_2(s,y)) \psi \;dxdtdyds \\
	& - \int_0^T \int_{-\infty}^\infty \int_0^T \int_{-\infty}^\infty \1_{(-\infty,b_2(s)]} (y) u_2(s,y) \sign (u_1(t,x) - u_2(s,y)) \psi \;dxdtdyds.
	\end{aligned}
\end{equation}
Summing up \eqref{e:Integral_Identity_for_u_1-u_2} and \eqref{e:Integral_Identity_for_u_2-u_1}, we finally obtain
\begin{equation}\label{e:Est_for_u_1-u_2_in_doubling_varialbes}
	\begin{aligned}
	& \int_0^T \int_{-\infty}^\infty \int_0^T \int_{-\infty}^\infty | u_1(t,x) - u_2(s,y) | \left( \partial_t \psi + \partial_s \psi \right) \;dxdtdyds \\
	=\; & - \frac{1}{2} \int_0^T \int_{-\infty}^\infty \int_0^T \int_{-\infty}^\infty \left\{ \partial_x^2 u_1(t,x) - \partial_y^2 u_2(s,y) \right\} \sign (u_1(t,x) - u_2(s,y)) \psi \;dxdtdyds \\
	& + \int_0^T \int_{-\infty}^\infty \int_0^T \int_{-\infty}^\infty \1_{(-\infty,b_1(t)]} (x) | u_1(t,x) - u_2(s,y) | \psi \;dxdtdyds \\
	& + \int_0^T \int_{-\infty}^\infty \int_0^T \int_{-\infty}^\infty \left\{ \1_{(-\infty,b_1(t)]} (x) - \1_{(-\infty,b_2(s)]} (y) \right\} u_2(s,y) \sign (u_1(t,x) - u_2(s,y)) \psi \;dxdtdyds .
	\end{aligned}
\end{equation}

Now, for any test function $\rho:=\rho(\tau,\xi)\in C^\infty_c([0,T]\times\R)$ and $h>0$, we choose
\begin{equation}\label{e:Choice_of_psi}
	\psi(t,x,s,y) := \rho \left( \frac{t+s}{2} , \frac{x+y}{2}  \right) \sigma_h (t-s) \sigma_h (x-y),
\end{equation}
where $\sigma_h (\cdot) := \frac{1}{h} \sigma (\frac{\cdot}{h})$ is a sequence of positive and smooth functions approximating the Dirac delta mass at the origin, namely
\[
	\sigma\in C^\infty_c(\R), \quad \sigma\geq 0, \quad \int_{-\infty}^\infty \sigma (z) \;dz = 1, \quad\mbox{and}\quad \supp\;\sigma \subseteq [-1,1].
\]
A direct computation yields
\[
		(\partial_t + \partial_s) \psi (t,x,s,y) = \partial_\tau \rho \left( \frac{t+s}{2} , \frac{x+y}{2}  \right) \sigma_h (t-s) \sigma_h (x-y),
\]
so applying the test function $\psi$ defined in \eqref{e:Choice_of_psi} to \eqref{e:Est_for_u_1-u_2_in_doubling_varialbes}, and then passing to the limit $h\to 0^+$, we have
\begin{equation}\label{e:Est_for_u_1-u_2}
\begin{aligned}
& \int_0^T \int_{-\infty}^\infty | u_1(t,x) - u_2(t,x) | \partial_t \rho (t,x) \;dxdt \\
=\; & - \frac{1}{2} \int_0^T \int_{-\infty}^\infty \partial_x^2 \left\{ u_1(t,x) - u_2(t,x) \right\} \sign (u_1(t,x) - u_2(t,x)) \rho (t,x) \;dxdt \\
& + \int_0^T \int_{-\infty}^\infty \1_{(-\infty,b_1(t)]} (x) | u_1(t,x) - u_2(t,x) | \rho (t,x) \;dxdt \\
& + \int_0^T \int_{-\infty}^\infty \left\{ \1_{(-\infty,b_1(t)]} (x) - \1_{(-\infty,b_2(t)]} (x) \right\} u_2(t,x) \sign (u_1(t,x) - u_2(t,x)) \rho (t,x) \;dxdt .
\end{aligned}
\end{equation}

Let $\tau\in (0,T]$ be an arbitrary time. In order to obtain the $L^1$ control on $u_1 - u_2$ at the time $t=\tau$, we choose the following test function:
\begin{equation}\label{e:Choice_of_rho}
	\rho(t,x) := [\1_{(\epsilon,\tau-\epsilon)} (t)]_\epsilon \cdot \zeta_R (x).
\end{equation}
Here, $[\1_{(\epsilon,\tau-\epsilon)} (t)]_\epsilon := [\1_{(\epsilon,\tau-\epsilon)} (t)]*\sigma_\epsilon$ and $\zeta_R (x) := \zeta (\frac{x}{R})$, where $\sigma_\epsilon$ is the mollifier that approximating the Dirac delta mass, and the function $\zeta\in C^\infty_c (\R)$ satisfies
\[
	\zeta\geq 0, \quad \zeta\equiv 1 \mbox{ in $[-1,1]$}, \quad\mbox{and}\quad \supp \;\zeta \subseteq [-2,2].
\]
Substituting \eqref{e:Choice_of_rho} into \eqref{e:Est_for_u_1-u_2}, integrating by parts with respect to $x$ in the first integral on the right hand side of \eqref{e:Est_for_u_1-u_2}, passing to the limit $R\to\infty$ first, and then passing to the limit $\epsilon\to 0^+$, we finally obtain
\begin{equation}\label{e:L^1_Est_for_u_1-u_2}
\begin{aligned}
& \int_{-\infty}^\infty | u_1(\tau,x) - u_2(\tau,x) | \;dx  \\
\leq\; & - \int_0^{\tau} \int_{-\infty}^\infty \1_{(-\infty,b_1(t)]} (x) | u_1(t,x) - u_2(t,x) |  \;dxdt \\
& - \int_0^{\tau} \int_{-\infty}^\infty \left\{ \1_{(-\infty,b_1(t)]} (x) - \1_{(-\infty,b_2(t)]} (x) \right\} u_2(t,x) \sign (u_1(t,x) - u_2(t,x)) \;dxdt 
\end{aligned}
\end{equation}
since both $u_1$ and $u_2$ satisfy the same initial condition $\eqref{e:MainSystem}_2$.

Define $\tilde{u}:=u_1-u_2$. Then estimate~\eqref{e:L^1_Est_for_u_1-u_2} implies that for any $\tau\in (0,T]$,
\begin{equation}\label{e:L^1_Est_for_tilde_u}
\|\tilde{u}(\tau)\|_{L^1(\R)} 
\leq \int_{0}^{\tau} \|\left( \1_{(-\infty,b_1(t)]} - \1_{(-\infty,b_2(t)]} \right) u_2(t)\|_{L^1(\R)} \;dt.
\end{equation}
On the other hand, using \eqref{e:comp1}, we have
\begin{equation}\label{e:G'_Identity}
\int_{-\infty}^{b_1(t)}  u_1 (t,x) \;dx = - G'(t) = \int_{-\infty}^{b_2(t)}  u_2 (t,x) \;dx,
\end{equation}
and hence, by $u_2>0$,
\begin{equation}\label{e:technical_identity_for_a_L^1_norm}
\begin{aligned}
& \|\left( \1_{(-\infty,b_1(t)]} - \1_{(-\infty,b_2(t)]} \right) u_2 (t) \|_{L^1(\R)} \\
=\;& \int_{-\infty}^{\infty}  \sign (b_1(t)-b_2(t))\left( \1_{(-\infty,b_1(t)]}(x) - \1_{(-\infty,b_2(t)]}(x) \right) u_2 (t,x) \;dx\\
=\;& - \sign (b_1(t)-b_2(t)) \int_{-\infty}^{\infty}  \1_{(-\infty,b_1(t)]}(x) \tilde{u} (t,x) \;dx.
\end{aligned}
\end{equation}
Applying \eqref{e:technical_identity_for_a_L^1_norm} to \eqref{e:L^1_Est_for_tilde_u}, we finally obtain
\[
\|\tilde{u}(\tau)\|_{L^1(\R)} \leq \int_{0}^{\tau} \| \1_{(-\infty,b_1(t)]} \tilde{u}(t) \|_{L^1(\R)} \;dt \leq \int_{0}^{\tau} \|\tilde{u}(t)\|_{L^1(\R)} \;dt. 
\]
It follows from Gr\"{o}nwall's inequality that $\tilde{u}\equiv 0$, or equivalently $u_1\equiv u_2$. Using $u_1\equiv u_2$ and the strict positivity of  $u_2$, we can conclude from \eqref{e:G'_Identity} that $b_1 \equiv b_2$.

\subsection{The solution to the IFPTK}\label{s:IFPTK}
We are ready to put all the pieces together and prove our main theorem.
\main*
\begin{proof}
From Sections \ref{ss:convergence} and \ref{ss:consistency}, combined with Theorem \ref{t:prob} we obtain the existence of a continuous barrier solving the IFPTK. The uniqueness follows from Section \ref{ss:uniqueness}. By Theorem \ref{t:prob}, the probabilistic interpretation works. This completes the proof.
\end{proof}

\section{The First Passage Time Problem for Killed Brownian Motion}\label{s:first}
We briefly describe how one can use PDE to study the first passage time problem for killed Brownian motion. Suppose we are given an initial density of the starting point of the Brownian motion $f(x)$ for $x\in\R$, and a barrier function $b: \R_+\to\R$.
\begin{theorem}
Assume $b:\R_+\to\R$ is measurable, $f\in L^2(\R)$, $f>0$ and $\int_\R f(y)\,dy=1$. Then for $T>0$ the system
\begin{equation*}
\left\{\begin{aligned}
\partial_t u &= \frac{1}{2} \partial_x^2 u - \1_{(-\infty,b(t)]} u, \quad\mbox{for all }t\in[0,T] \\
u(0,x) &= f(x), \quad\mbox{for all } x\in\R
\end{aligned}\right.
\end{equation*}
has a unique weak solution with $u\in  W^1((0,T); H^1(\R),L^2(\R))$. Furhermore, one can compute the survival function of the stopping time $\tau$ as
\begin{equation}
\begin{split}
\Pr\{\tau>t\} &=  \int_\R \E \left[\exp\left(- \int_0^t \1_{(-\infty,0]}(x+B_{s}-b(s))\,ds\right)\right] f(x) \,dx\\
&= \int_\R u(t,x)\,dt,\quad\mbox{for all }t\in [0,T].
\end{split}
\end{equation}
\end{theorem}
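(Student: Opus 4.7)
The plan is to reduce this forward problem to machinery already developed for the inverse problem: the linear well-posedness theory of Section~\ref{s:linearizedprob} and the Feynman--Kac representation of Section~\ref{s:prob}, since here the barrier $b$ is \emph{given} and no nonlinear coupling through $G$ is present.

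First, for existence and uniqueness I would apply Claim~\ref{cl:Existence&Uniqueness_of_Linear_Parabolic_Eqt} directly with the choice $c(t):=b(t)$ and $\psi(t,x):=\1_{(-\infty,b(t)]}(x)$. The measurability of $b$ is enough for $\psi$ to be a measurable bounded coefficient in $[0,T]\times\R$, which is all the contraction/energy argument of that claim uses; note that the argument nowhere required $c$ to be continuous. This yields the unique solution $u\in C([0,T];H^1(\R))\cap L^2([0,T];H^2(\R))$ with $f=u_0\in L^2(\R)\cap H^1(\R)$ (if $f\notin H^1$ one can mollify $f$, apply the claim, and pass to the limit using the $L^2$ energy identity~\eqref{eq:enL2} to obtain a weak solution in the larger space $W^1((0,T);H^1(\R),L^2(\R))$; the inclusion $C([0,T];H^1)\cap L^2([0,T];H^2)\subset W^1((0,T);H^1,L^2)$ noted in the proof of Theorem~\ref{t:prob} gives the claimed regularity class).

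Second, for the probabilistic identity I would invoke Theorem~\ref{t:prob} verbatim: the hypotheses there are exactly the regularity class just produced together with $f\in L^2(\R)$ and $\int_\R f=1$, and the proof only uses the weak formulation of the PDE plus Glau's Feynman--Kac theorem, not any property of $b$ beyond measurability. This gives, for a.e.\ $x\in\R$,
\[
u(t,x)=\E\Bigl[f(x+B_t)\exp\Bigl(-\int_0^t \1_{(-\infty,0]}(x+B_{t-s}-b(s))\,ds\Bigr)\Bigr].
\]
Integrating in $x$, applying Fubini (justified since the integrand is bounded by $f(x+B_t)$ which is jointly integrable), and using the time-reversal $(B_s)_{0\le s\le t}\stackrel{d}{=}(B_t-B_{t-s})_{0\le s\le t}$ exactly as in the proof of Theorem~\ref{t:prob}, one converts the expression into $\int_\R \E[\exp(-\int_0^t \1_{(-\infty,0]}(x+B_s-b(s))\,ds)]f(x)\,dx$. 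By the definition~\eqref{e_survival_tau_2} of $\Pr\{\tau>t\}$, this is precisely $\Pr\{\tau>t\}$, and it also equals $\int_\R u(t,x)\,dx$ by the Fubini step.

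The main, and really only, obstacle I anticipate is a bookkeeping one: checking that Claim~\ref{cl:Existence&Uniqueness_of_Linear_Parabolic_Eqt} applies when $b$ is merely measurable rather than continuous (as was assumed in the iterative scheme), and checking that the applicability of \cite[Theorem 3.4]{G16} via Theorem~\ref{t:prob} does not implicitly require continuity of the killing rate $\kappa(t,x)=\1_{(-\infty,0]}(x-b(t))$. Re-reading Section~\ref{s:linearizedprob}, the contraction estimate only uses $0\le\psi\le 1$ and measurability, and the trace manipulations (e.g.\ \eqref{e:est}) are performed against the Sobolev function $u$ and not against $\psi$, so measurability suffices. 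For \cite[Theorem 3.4]{G16}, the stated hypotheses require $\kappa$ to be bounded and measurable, which is satisfied here. Everything else is routine.
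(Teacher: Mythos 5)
Your proof is correct and takes essentially the same route as the paper, whose entire argument is a one-sentence citation of \cite[Theorem 3.4]{G16}; you merely fill in the well-posedness step via the paper's own Claim~\ref{cl:Existence&Uniqueness_of_Linear_Parabolic_Eqt} before invoking Glau's Feynman--Kac representation through Theorem~\ref{t:prob}. Your observations that measurability of $b$ suffices and that $f\in L^2(\R)\setminus H^1(\R)$ must be handled by mollification (or by noting that Glau's framework itself only demands $W^1((0,T);H^1(\R),L^2(\R))$ regularity, which is what the statement asserts) are accurate and supply detail the paper omits.
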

\begin{proof}
The proof follows once again from \cite[Theorem 3.4]{G16}, and as such is omitted.
\end{proof}

\section{Applications to Mathematical Finance}\label{s:app}
We provide a brief overview as to how one can use our model in mathematical finance. The random time $\tau$ can be seen as the default time of one of the parties involved in a financial agreement. This idea is not new. For example, in \cite{HW01} the authors model the default time as the first time a Brownian motion hits a time-dependent barrier. In our setting the Brownian motion can be seen as a credit index process. When the Brownian motion $B_t$ is large, this corresponds to a time $t$ when the counterparty is in sound financial health. As such, the killing rate $\1_{(-\infty,b(t)]}(B_t)$ is $0$ and default is unlikely. However, when $B_t$ is low, the killing rate $\1_{(-\infty,b(t)]}(B_t)$ is at its maximum value $1$ and default is more probable.

We remark that one can follow the method proposed in \cite{DP10} to calibrate the default time distribution of $\tau$ using the rates of credit default swaps (CDS).

We follow the strategy of \cite{EEH14} in order to showcase how one can price claims in this setting. Suppose that the asset price $(X_t)_{t\geq 0}$ is given by a geometric Brownian motion
\begin{equation}\label{e:asset}
dX_t = \mu X_t\,dt+\sigma X_t \,dW_t
\end{equation}
where $(W_t)_{t\geq 0}$ is a standard Brownian motion. Just as before, the default time is modeled by the Brownian motion $(B_t)_{t\geq 0}$. We do not assume that $(W_t)_{t\geq 0}$ and $(B_t)_{t\geq 0}$ are independent. Instead, we suppose that the two Brownian motions are correlated with correlation $\rho\in [-1,1]$. Without loss of generality one can write
\[
B_t =\rho B'_t + \sqrt{1-\rho^2}B_t''
\]
and
\[
W_t = B'_t
\]
for two independent Brownian motions $(B'_t)_{t\geq 0}$ and $(B''_t)_{t\geq 0}$. Suppose one wants to price contingent claims with a fixed maturity $T>0$ and payoff of the form
\[
F(X_T)\1_{\{\tau>T\}}.
\]
An immediate computation yields
\[
\E^x\left[F(X_T)\1_{\{\tau>T\}}\right] = \E^x\left[F(X_T)\exp\left(-\int_0^T \1_{(-\infty,0]}(B_s-b(s))\,ds\right)\right].
\]
In general, one is interested in the expected value of the payoff, given the past of the asset price and given that default did not happen yet. Therefore, one wants to be able to compute
\begin{equation*}
\begin{split}
\E^x\left[F(X_T)\1_{\{\tau>T\}}~|~(X_s)_{0\leq s\leq t}, \tau>t\right],
\end{split}
\end{equation*}
or equivalently
\[
\E^x\left[F(X_T)\exp\left(-\int_t^T \1_{(-\infty,0]}(B_s-b(s))\,ds\right)~\Bigg|~(X_s)_{0\leq s\leq t}, \tau>t\right].
\]

Consider the Markov process $Z_t:= (X_t,B_t)$. One can see that its generator acts on smooth enough functions $f$ via
\[
\mathcal{L} f= \frac{1}{2}\partial^2_{xx}f+\mu x \partial_xf +  \frac{1}{2}\partial^2_{yy}f + \rho\sigma\partial_x\partial_y f - 1_{(-\infty,0]}(y-b(t)).
\]
The Feynman-Kac formula from \cite{G16} tells us that if $F\in L^2(\R)$ and we set
\[
w(t,x,y):= \E^{(x,y)}\left[F(X_T)\exp\left(-\int_t^T \1_{(-\infty,0]}(B_s-b(s))\,ds\right)\right],
\]
then $w$ is the unique solution to
\begin{equation*}
\left\{\begin{aligned}
\partial_t w &=\mathcal{L}w,\quad\mbox{for all } t\in[0,T) \\
w(T,x,y) &= F(x), \quad\mbox{for all }x\in\R_+,\; y\in\R
\end{aligned}\right.
\end{equation*}
and $w\in W^1((0,T); H^1(\R),L^2(\R)).$ If we assume that the Brownian motion $(B_t)$ has a random starting point $B_0$ with density $f$, then we get
\[
\E^x\left[F(X_T)\exp\left(-\int_t^T \1_{(-\infty,0]}(B_s-b(s))\,ds\right)\right] = \int_\R w(t,x,y) f(y)\,dy.
\]

With this in hand we can follow the method from \cite[Section 5]{EEH14} to show that computing the price of a contingent claim in our setting reduces to solving certain PDE with coefficients depending on the path of the asset price.

\section{Killed Diffusions}\label{s:multi}

In this Section we will provide conjectures that generalize our results from a Brownian motion to general one-diumensional diffusions.

As before, suppose that $(B_t)_{t\geq 0}$ is a standard Brownian motion on a probability space $(\Omega,\F,\{\F_t\}_{t\geq0},\Pr)$ with a filtration $\{\F_t\}_{t\geq 0}$ satisfying the usual conditions. Define the one-dimensional diffusion $(Y_t)_{t\geq 0}$ via the SDE
\begin{equation}\label{e:Y}
dY_t = \mu(Y_t)\,dt + \sigma(Y_t)\,dB_t.
\end{equation}
We suppose that the functions $\sigma(\cdot)$ and $\mu(\cdot)\in C(\R)$ satisfy
\begin{itemize}
\item $\sigma(x)>0$ for all $x\in\R$; and
\item $\frac{1}{\sigma^2(\cdot)}$ and $\frac{\mu(\cdot)}{\sigma^2(\cdot)}$ are locally integrable on $\R$.
\end{itemize}
Under these conditions it is well known that the SDE \eqref{e:Y} has a solution that does not explode and is unique in law; see \cite{ES91} for instance. The process $(Y_t)_{t\geq 0}$ is a regular one-dimensional diffusion with scale function and speed measure densities given by
\begin{equation}\label{e:speed}
\begin{split}
m(dx) &= m(x)dx= \frac{2}{\sigma^2(x)}\exp\left(\int_0^x \frac{2}{\sigma^2(y)}\mu(y)\,dy\right)\\
S(dx) &= s(x)dx = \exp\left(-\int_0^x \frac{2}{\sigma^2(y)}\mu(y)\,dy\right).
\end{split}
\end{equation}
One can then define the random time
\bb\label{e_tau_Y}
\tau_Y:=\inf\left\{t>0: \lambda \int_0^t \1_{(-\infty,0]}(Y_s-b(s))\,ds>U\right\}
\ee
where $U$ is an independent exponential random variable with mean one. If $Y_0$ has a distribution with probability density $f$, then the lifetime of $\tau_Y$ can be computed as
\begin{equation}\label{e:survival_Y}
\Pr^x\{\tau_Y>t\}=\int_\R \E^x \left[\exp\left(- \int_0^t \1_{(-\infty,0]}(Y_{s}-b(s))\,ds\right)\right] f(x) \, dx.
\end{equation}

One can define the FPT and IFPT problems for the random time $\tau_Y$ from \eqref{e_tau_Y}. More specifically,
\begin{itemize}
\item The \textit{First Passage Time Problem for Killed Diffusions (FPTKD)}: For a given function $b:\R_+\to\R$, find the survival distribution of the first time that $(Y_t)_{t\geq 0}$ crosses $b$. That is, characterize
\[
\Pr\{ \tau_Y>t\},\quad\mbox{for all } t\geq 0.
\]
\item The \textit{Inverse First Passage Time Problem for Killed Diffusions (IFPTKD)}: For a given survival function $G$ on $(0,\infty)$ does there exist a function $b$ such that (using \eqref{e_survival_tau_2})
\[
G(t) = \Pr\{ \tau_Y>t\} = \int_\R \E^x \left[\exp\left(-\int_0^t \1_{(-\infty,0]}(Y_s-b(s))\,ds\right)\right] f(x)\,dx,
\]
for all $t\geq 0$?
\end{itemize}
Any one-dimensional diffusion is time-reversible with respect to its
speed measure, $m$.  That is, if $Y_0$ is distributed as $m$, then $(Y_s)_{0
\le s \le t}$ has the same distribution as $(Y_{t-s})_{0 \le s \le t}$ (see \cite{HP86} for more general results). Using this we can rewrite \eqref{e:survival_Y} as

\begin{equation}\label{e:surv}
\begin{split}
\Pr^x\{\tau_Y>t\}&= \int_\R \E^x \left[\exp\left(- \int_0^t \1_{(-\infty,0]}(Y_{s}-b(s))\,ds\right)\right] f(x) \, dx\\
&= \int_\R \E^x \left[\exp\left(- \int_0^t \1_{(-\infty,0]}(Y_{s}-b(s))\,ds\right)\right] \frac{f(x)}{m(x)} m(x) \, dx\\
&= \int_\R \E^x \left[\frac{f(Y_0)}{m(Y_0)} \exp\left(- \int_0^t \1_{(-\infty,0]}(Y_{s}-b(s))\,ds\right)\right]  m(x) \, dx\\
&= \int_\R \E^x \left[\frac{f(Y_t)}{m(Y_t)} \exp\left(- \int_0^t \1_{(-\infty,0]}(Y_{t-s}-b(s))\,ds\right)\right]  m(x) \, dx.
\end{split}
\end{equation}
If we set
\[
u(t,x):= \E^x \left[\frac{f(Y_t)}{m(Y_t)} \exp\left(- \int_0^t \1_{(-\infty,0]}(Y_{t-s}-b(s))\,ds\right)\right],
\]
then by Feynman-Kac formula in \cite{G16}, $u$ should satisfy
\begin{equation}\label{e:PDE_Y}
\begin{split}
\partial_t u &= \frac{1}{2}\sigma^2(x)\partial_{xx}^2u + \mu(x)\partial_x u -\1_{(-\infty,b(t)]}u\\
u(0,x) &= \frac{f(x)}{m(x)}, \quad\mbox{for all }x\in \R.
\end{split}
\end{equation}
The IFPT problem in this setting therefore reduces to studying the existence and uniqueness of solutions $(u,b)$ to
\begin{equation}\label{e:PDE_int}
\begin{split}
\partial_t u &= \frac{1}{2}\sigma^2(x)\partial_{xx}^2u + \mu(x)\partial_x u -\1_{(-\infty,b(t)]}u\\
u(0,x) &= \frac{f(x)}{m(x)},\quad\mbox{for all } x\in \R\\
G(t) &= \int_\R u(t,x) m(x)\,dx,\quad\mbox{for all } t\geq 0
\end{split}
\end{equation}
when the functions $G,f$ are given and $m$ is the speed measure from \eqref{e:speed}.

\begin{conj}
Assume that $G\in C^2(\R_+)$ is a function such that $G(0)=\int_\R f(x)\,dx=1$, $f$ is strictly positive on $\R$ and $f/m \in L^2(\R)$. Then for any fixed $T>0$, \eqref{e:PDE_int} has a unique weak solution $(u,b)$ on $[0,T]$ such that $u/m\in C([0,T];H^1(\R))\cap L^2([0,T];H^2(\R))$ and $b\in C([0,T])$. Furthermore, this implies that $b$ is the unique barrier such that
\[
G(t) = \Pr\{ \tau_Y>t\} = \int_\R \E^x \left[\exp\left(-\int_0^t \1_{(-\infty,0]}(Y_s-b(s))\,ds\right)\right] f(x)\,dx, t\in [0,T].
\]
\end{conj}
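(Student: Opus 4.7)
\medskip

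\textbf{Proof proposal for the conjecture.} The plan is to adapt the four-part argument (linearized problem, iteration scheme, Feynman--Kac representation, doubling-variables uniqueness) from the Brownian case, working with the variable-coefficient operator $\mathcal{L} = \frac{1}{2}\sigma^2(x)\partial_x^2 + \mu(x)\partial_x$ and weighting by the speed measure $m$. It is natural to introduce $v := u\cdot m$ so that the mass constraint becomes $G(t) = \int_\R v(t,x)\,dx$, mirroring \eqref{e:MassIdentity}. A short computation using \eqref{e:speed} shows that $v$ satisfies a divergence-form equation
\[
\partial_t v = \tfrac{1}{2}\partial_x\!\bigl(\sigma^2(x)\,\partial_x(v/m)\cdot m\bigr) - \partial_x(\mu v) - \1_{(-\infty,b(t)]} v,
\]
which after simplification via \eqref{e:speed} can be recast as the formally self-adjoint operator $\partial_t v = \tfrac{1}{2}\partial_x(\sigma^2 s \,\partial_x(v/s))/s - \1_{(-\infty,b]}v$ in the appropriate weighted space; this is the analogue of the heat operator $\partial_t - \tfrac12 \partial_x^2$ and will allow classical energy estimates in $L^2(\R, m\,dx)$.

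First I would establish the analogue of Claim~\ref{cl:Existence&Uniqueness_of_Linear_Parabolic_Eqt}: for a prescribed $c(t)$ and initial data $u_0=f/m\in L^2(\R)$, solve the Cauchy problem for the linearized equation $\partial_t u = \mathcal{L}u - \1_{(-\infty,c(t)]}u$ by contraction mapping, and derive an $L^\infty_t H^1_x \cap L^2_t H^2_x$ (weighted) estimate. The Agmon-type interpolation in Lemma~\ref{inter} extends provided $\sigma$ is bounded away from $0$ on compacts and $\mu/\sigma^2$ is locally integrable; the boundary term $u(t,c(t))\partial_x u(t,c(t))$ in the analogue of \eqref{e:diff} is controlled in the same way. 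Next, the analogue of Claim~\ref{fd<u<fh} follows from the parabolic comparison principle applied to the damped equation $\partial_t f_d = \mathcal{L} f_d - f_d$ and the homogeneous equation $\partial_t f_h = \mathcal{L} f_h$ with the same initial datum; these have explicit subsolutions/supersolutions via the transition density of $(Y_t)$, which is strictly positive on $\R$ because the diffusion is regular. With these two ingredients the iteration scheme of Section~\ref{s:Scheme} goes through verbatim: $u_{k+1}\leq u_k$ by the comparison principle, $b_{k+1}\geq b_k$ by the strict positivity of $u_{k+1}$, and pointwise convergence to a limit $(\tilde u,\tilde b)$ uniformly on compacts by Dini, with the mass identity passing to the limit via dominated convergence against $f_h\in L^1$.

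For the probabilistic interpretation, I would apply \cite[Theorem~3.4]{G16} to the symbol $A(x,\xi)$ of $-\mathcal{L}$, checking the analogues of conditions (A1)--(A4); these are satisfied for smooth uniformly elliptic $\sigma,\mu$ with appropriate growth, which gives
\[
u(t,x) = \E^x\!\left[\frac{f(Y_t)}{m(Y_t)}\exp\!\left(-\int_0^t \1_{(-\infty,0]}(Y_{t-s}-b(s))\,ds\right)\right].
\]
Combined with the time-reversal identity \eqref{e:surv} for diffusions (which uses precisely the reversibility of $(Y_t)$ with respect to $m$), this gives the IFPT representation stated in the conjecture. The analogue of Lemma~\ref{lem:CompatibilityCondition2}, i.e.\ the identity $G'(t) = -\int_{-\infty}^{b(t)} u(t,x)\,m(x)\,dx$, then follows by differentiating the Feynman--Kac formula as in that lemma.

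Finally, uniqueness is handled by the doubling-variables argument of Section~\ref{ss:uniqueness}. Applied to the difference $\tilde u = u_1 - u_2$ in the weighted $L^1(\R,m\,dx)$ norm, the extra first-order term $\mu\partial_x\tilde u$ integrates against $\sign(\tilde u)$ to a locally bounded contribution that is absorbed by Gr\"onwall, while the key identity \eqref{e:technical_identity_for_a_L^1_norm} survives because the mass constraint is now $\int_{-\infty}^{b_i(t)} u_i(t,x)\,m(x)\,dx = -G'(t)$ with the \emph{same} weight $m$ on both sides. The hardest step will be the linearized $H^1\cap H^2$ estimate in the weighted setting, because the boundary term at $x=c(t)$ produced by differentiating the discontinuous killing rate must be absorbed into the dissipation $\int\sigma^2|\partial_x^2 u|^2\,m\,dx$; this requires a sharp form of the Agmon inequality that tracks the possible degeneracy or blow-up of $\sigma^2$ and $m$ on compact subsets, and will likely force growth or non-degeneracy hypotheses on $(\sigma,\mu)$ beyond those stated, which is probably why the result is only conjectured here.
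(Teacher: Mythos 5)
This statement is explicitly labeled a \emph{Conjecture} in Section~\ref{s:multi} of the paper. The authors give no proof: they only derive the constrained PDE system~\eqref{e:PDE_int} heuristically, via the Feynman--Kac formula and time-reversal of $(Y_t)$ with respect to its speed measure $m$ (equation~\eqref{e:surv}), and leave both the analytical well-posedness and the rigorous probabilistic identification open. There is therefore no paper proof to compare your attempt against, and any ``proof'' of this conjecture would be new material.

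That said, your sketch is the natural route and closely mirrors the structure the authors used in the Brownian case: the conjugation $v=u\cdot m$ puts the generator $\mathcal{L}=\tfrac{1}{2}\sigma^2\partial_x^2+\mu\partial_x$ in Sturm--Liouville form $\mathcal{L}u=\frac{1}{m}\partial_x\bigl(\tfrac{1}{s}\partial_x u\bigr)$, which is the right analogue of $\tfrac12\partial_x^2$ for energy estimates in $L^2(m\,dx)$, and the constraint $\int_{-\infty}^{b_i(t)} u_i\, m\,dx=-G'(t)$ does make identity~\eqref{e:technical_identity_for_a_L^1_norm} survive with the same weight on both sides, so the doubling-variables uniqueness argument plausibly carries over. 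You are also right to flag the linearized $H^1\cap H^2$ estimate (the analogue of Claim~\ref{cl:Existence&Uniqueness_of_Linear_Parabolic_Eqt}) as the crux: in the Brownian case the Dirac term produced by differentiating $\1_{(-\infty,c(t)]}$ is absorbed by the Laplacian dissipation via Lemma~\ref{inter}, but with variable $\sigma$, differentiating the equation produces additional commutator terms $[\partial_x,\mathcal{L}]u$ and the trace estimate at $x=c(t)$ must be matched against the degenerate dissipation $\int\sigma^2|\partial_x^2 u|^2 m\,dx$; without global non-degeneracy and growth control on $(\sigma,\mu)$, that step can genuinely fail. Two further points a complete argument would need to address: (i) a weighted analogue of the Compatibility Conditions (Definition~\ref{a:Compatibility_Conditions}), in particular $G'(0)=-\int_{-\infty}^{b_0}u_0\,m\,dx$, since the conjecture as stated omits this; and (ii) verification of Glau's hypotheses~(A1)--(A4) for the symbol of $-\mathcal{L}$, which is not automatic for non-constant coefficients and is likely where additional assumptions on $(\sigma,\mu)$ would be imposed. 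Your closing remark that the result ``will likely force growth or non-degeneracy hypotheses beyond those stated'' is exactly why the paper presents this only as a conjecture.
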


{\bf Acknowledgments.} The authors thank Kathrin Glau and Steve Evans for helpful discussions. T. K. Wong is partially supported by the HKU Seed Fund for Basic Research under the project code 201702159009, and the start-up Allowance for Croucher Award Recipients. A. Hening is supported by the NSF through the grant DMS 1853463.

\appendix

\section{Comparison Principle}\label{app:Comparison_Principle}

Below we prove a comparison principle for distributional solutions.

\begin{prop}\label{prop:Comparison_Principle}
	Let $(u^i,b^i)\in C([0,T];H^1(\R))\cap L^2 ([0,T];H^2(\R)) \times C([0,T])$ satisfy the integral identity~\eqref{e:DistributionalSoln} for any test function $\phi\in C^\infty_c([0,T]\times\R)$ with the same initial data $u_0$ in $\eqref{e:MainSystem}$, for $i=1$, $2$. Assume that $b^1\leq b^2$ in $[0,T]$ and $u^2\geq 0$ in $[0,T]\times\R$. Then
	\begin{equation}\label{e:Comparison_of_u}
	u^1 \geq u^2 \quad\mbox{in $[0,T]\times\R$.}
	\end{equation}
\end{prop}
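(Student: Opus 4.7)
The plan is to carry out the duality argument that the authors alluded to in the body of the paper. Set $w:=u^1-u^2$ and subtract the two integral identities \eqref{e:DistributionalSoln} to obtain, for every $\phi\in C^\infty_c([0,T]\times\R)$,
\[
\int_0^T\!\!\int_\R w\,\partial_t\phi\,dxdt = \int_\R w|_{t=T}\phi|_{t=T}\,dx - \tfrac12\int_0^T\!\!\int_\R w\,\partial_x^2\phi\,dxdt + \int_0^T\!\!\int_\R \mathbf{1}_{(-\infty,b^1(t)]}w\,\phi\,dxdt - \int_0^T\!\!\int_\R\bigl(\mathbf{1}_{(-\infty,b^2(t)]}-\mathbf{1}_{(-\infty,b^1(t)]}\bigr)u^2\,\phi\,dxdt,
\]
where I have already decomposed $\mathbf{1}_{(-\infty,b^1]}u^1-\mathbf{1}_{(-\infty,b^2]}u^2=\mathbf{1}_{(-\infty,b^1]}w-(\mathbf{1}_{(-\infty,b^2]}-\mathbf{1}_{(-\infty,b^1]})u^2$. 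The key structural observation, which the whole argument rests on, is that $b^1\leq b^2$ and $u^2\geq 0$ force the last term to have a favourable sign: the integrand $(\mathbf{1}_{(-\infty,b^2(t)]}-\mathbf{1}_{(-\infty,b^1(t)]})u^2$ is pointwise non-negative. Combined with the fact that $w|_{t=0}=0$ (the two solutions share the same initial datum), formally $w$ is a solution of a linear parabolic equation with a non-negative zeroth-order coefficient, a non-negative source, and vanishing initial data, so the (formal) maximum principle gives $w\geq 0$.

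To make this rigorous I will construct the adjoint test function. Fix any $\chi\in C^\infty_c((0,T)\times\R)$ with $\chi\geq 0$ and consider the backward problem
\[
-\partial_t\phi-\tfrac{1}{2}\partial_x^2\phi+\mathbf{1}_{(-\infty,b^1(t)]}\phi=\chi,\qquad \phi(T,\cdot)=0.
\]
Under the time reversal $\tilde\phi(t,x):=\phi(T-t,x)$ this becomes a forward Cauchy problem of precisely the type treated in Claim~\ref{cl:Existence&Uniqueness_of_Linear_Parabolic_Eqt}, but with an extra bounded compactly supported source $\chi(T-t,x)$. Adapting the contraction argument and the energy estimate of that claim in the standard way to include a source, one obtains a unique solution $\tilde\phi\in C([0,T];H^1(\R))\cap L^2([0,T];H^2(\R))$. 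Moreover, since the killing coefficient $\mathbf{1}_{(-\infty,b^1(T-t)]}$ is non-negative and the source $\chi\geq 0$, a direct energy argument testing the equation for $\tilde\phi$ against $-\tilde\phi^{-}$ (or equivalently the Duhamel formula using the positivity of the semigroup established in Claim~\ref{cl:posu0}) yields $\tilde\phi\geq 0$, hence $\phi\geq 0$.

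The final step is to insert this $\phi$ as a test function in the weak equation for $w$. Since $\phi$ has only $L^2_tH^2_x\cap C_tH^1_x$ regularity and need not be compactly supported in $x$, I will approximate it by $\phi_{\varepsilon,R}:=(\phi\,\eta_R)*\rho_\varepsilon$, where $\eta_R$ is a smooth spatial cut-off that is $1$ on $[-R,R]$ and $\rho_\varepsilon$ is a space-time mollifier. The commutator errors produced by mollifying against the discontinuous coefficient $\mathbf{1}_{(-\infty,b^1(t)]}$ vanish as $\varepsilon\to 0^+$ by the standard mollifier argument detailed in the remark following Claim~\ref{cl:Existence&Uniqueness_of_Linear_Parabolic_Eqt}, and the cut-off errors $\partial_x\eta_R\cdot\partial_x\phi$, $\partial_x^2\eta_R\cdot\phi$ vanish as $R\to\infty$ thanks to the $L^2_tH^2_x$ decay of $\phi$. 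Passing to the limit, the $t=T$ boundary term drops out because $\phi(T,\cdot)=0$, and what remains is the identity
\[
\int_0^T\!\!\int_\R w\,\chi\,dxdt=\int_0^T\!\!\int_\R \bigl(\mathbf{1}_{(-\infty,b^2(t)]}-\mathbf{1}_{(-\infty,b^1(t)]}\bigr)u^2\,\phi\,dxdt\geq 0.
\]
Since $\chi\geq 0$ is arbitrary, this forces $w\geq 0$ almost everywhere, and by continuity of $u^i$ in $[0,T]\times\R$ (which follows from $u^i\in C([0,T];H^1(\R))$) the inequality $u^1\geq u^2$ holds pointwise. The main technical obstacle is precisely the justification of this last approximation step: one needs the adjoint solution to have enough spatial decay for the cut-off limit, and enough regularity for the mollification commutator with the discontinuous killing coefficient to vanish. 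Both are by now standard given the energy estimate \eqref{estLinftyH1&L2H2}, and I would present them as a short appendix-style lemma, exactly along the lines of the mollification remark already included in the proof of Claim~\ref{cl:Existence&Uniqueness_of_Linear_Parabolic_Eqt}.
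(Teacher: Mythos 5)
Your proposal is correct and uses the same overall duality strategy as the paper, but it makes a different choice about where to put the regularization, and the difference is worth noting.

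The paper's proof regularizes the \emph{coefficient}: it replaces $\1_{(-\infty,b^1(t)]}$ by a sequence of smooth nonnegative functions $c_j$ and solves the adjoint backward problem $-\partial_t\varphi_j - \tfrac12\partial_x^2\varphi_j = -c_j\varphi_j + h$, $\varphi_j(T,\cdot)=0$. Because the coefficients are smooth, $\varphi_j$ is a classical $C^{1,2}$ solution, nonnegativity comes from the classical maximum principle, and $\phi_R\varphi_j$ can be used as a test function without any mollification. The price is the extra error term $\mathcal{I}_1 = \int(\1_{(-\infty,b^1]} - c_j)\phi_R\varphi_j\tilde u$, which must be shown to vanish as $j\to\infty$.

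Your proof regularizes the \emph{test function} instead: you solve the adjoint problem directly with the discontinuous killing coefficient, obtaining an adjoint solution $\phi$ with only $C([0,T];H^1)\cap L^2([0,T];H^2)$ regularity, and then cut off and mollify $\phi$ to make it admissible in the weak formulation. The price here is that you must (i) argue nonnegativity of $\phi$ without the classical maximum principle — you do this via Claim~\ref{cl:posu0} or an energy argument with the negative part, both of which work; and (ii) control the mollification/cutoff errors, which you correctly identify as the main technical burden. One small simplification you could note: once you show that the terms in the weak formulation are all jointly continuous under the convergences $\phi_{\varepsilon,R}\to\phi$, $\partial_t\phi_{\varepsilon,R}\to\partial_t\phi$, $\partial_x^2\phi_{\varepsilon,R}\to\partial_x^2\phi$ in $L^2_{loc}$, you can pass to the limit \emph{first} and then substitute the adjoint equation for $\partial_t\phi$ (which holds a.e.\ in $L^2$), avoiding the commutator with $\1_{(-\infty,b^1(t)]}$ entirely; the commutator only appears if you insist on substituting the mollified equation before taking limits.

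Both routes are valid. The paper's version is technically leaner because the classical regularity of $\varphi_j$ sidesteps the question of whether the weak formulation~\eqref{e:DistributionalSoln} extends to $H^2$-valued test functions; your version is conceptually more direct (no auxiliary approximating coefficients, the adjoint is the ``true'' adjoint) but pushes that regularity question onto the test-function side, where it must be settled by the mollification lemma you sketch. The structural observation that drives everything — $b^1\leq b^2$ and $u^2\geq 0$ make $(\1_{(-\infty,b^2(t)]}-\1_{(-\infty,b^1(t)]})u^2\geq 0$ — is identical in both proofs.
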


\begin{proof}
Let $\tilde{u}:=u^1-u^2$. We will show that $\tilde{u}\geq 0$. For any arbitrary smooth and non-negative function $h$ with  compact support, we consider the following adjoint problem: for any $j=1$, $2$, $,\cdots$,
	\begin{equation}\label{e:AdjProb}
	\left\{\begin{aligned}
	- \partial_t \varphi_j - \frac{1}{2}\partial_x^2 \varphi_j &= - c_j \varphi_j + h\\
	\varphi_j (T,x) &= 0,
	\end{aligned}\right.
	\end{equation}
	where $\{c_j\}_{j=1}^\infty$ is a sequence of positive and smooth functions that approximate $\1_{(-\infty,b^1(t)]}$ in the following sense:
	\[
	c_j\to\1_{(-\infty,b(t)]} \quad\text{in } L^{\infty}([0,T]\times\R).
	\]
	Since all coefficients of the adjoint problem are smooth, it follows from the classical theory of scalar parabolic equations that there exists a unique classical solution $\varphi_j\in C^{1,2}([0,T]\times\R)$ to the adjoint problem~\eqref{e:AdjProb}. Furthermore, it follows from comparison principle and uniform $L^\infty$ bound of $\{c_j\}_{j=1}^\infty$ that the sequence $\{\varphi_j\}_{j=1}^\infty$ of unique solutions to the adjoint problem~\eqref{e:AdjProb} are indeed uniformly dominated by a function in $C([0,T];L^1(\R))$. In addition, using the facts that $h\geq 0$ and $c_j>0$, one may apply the classical maximum principle to show that $\varphi_j \geq0$ in $[0,T]\times\R$.
	
	Let $\phi\in C^\infty_c(\R)$ be a smooth cut-off function such that
	\[\left\{\begin{aligned}
	\phi &\equiv 1 &&\mbox{for all $-1\leq x\leq 1$}, &
	\phi &\equiv 0 &&\mbox{for all $|x|\geq 2$},\\
	|\phi'| &\leq 2 &&\mbox{for all $x\in\R$}, &
	|\phi''| &\leq 8 &&\mbox{for all $x\in\R$}.
	\end{aligned}\right.\]
	Define $\phi_R (x) := \phi(x/R)$. Then applying $\phi_R \varphi_j$ as test functions to the weak solutions $(u^1,b^1)$ and $(u^2,b^2)$, we have, via \eqref{e:DistributionalSoln},
	\begin{equation}\label{e:Weak_Soln_Identity_for_tildeu}
		\begin{aligned}
		&\int_0^T \int_{-\infty}^\infty \tilde{u} \partial_t (\phi_R \varphi_j) \;dxdt + \frac{1}{2} \int_0^T \int_{-\infty}^\infty \tilde{u} \partial_x^2 (\phi_R \varphi_j) \;dxdt \\
		&\qquad\qquad = \int_0^T \int_{-\infty}^\infty  \1_{(-\infty,b^1(t)]} \phi_R \varphi_j \tilde{u} \;dxdt - \int_0^T \int_{-\infty}^\infty \1_{(b^1(t),b^2(t)]} \phi_R \varphi_j u^2 \;dxdt.
		\end{aligned}
	\end{equation}
	Applying the adjoint equation~$\eqref{e:AdjProb}_1$ to \eqref{e:Weak_Soln_Identity_for_tildeu}, we obtain, since $\1_{(b^1(t),b^2(t)]}$, $\phi_R$, $\varphi_j$ and $u^2$ are all non-negative functions, that
	\begin{equation}\label{e:Weak_Soln_Ineq_wtih_I1_I2}
	\begin{aligned}
	0 &= \int_0^T \int_{-\infty}^\infty \phi_R h \tilde{u} \;dxdt - \int_0^T \int_{-\infty}^\infty \1_{(b^1(t),b^2(t)]} \phi_R \varphi_j u^2 \;dxdt + \mathcal{I}_1 + \mathcal{I}_2 \\
	&\leq \int_0^T \int_{-\infty}^\infty \phi_R h \tilde{u} \;dxdt + \mathcal{I}_1 + \mathcal{I}_2 ,
	\end{aligned}
	\end{equation}
 where
	\[\begin{aligned}
		\mathcal{I}_1 &:= \int_0^T \int_{-\infty}^\infty \left( \1_{(-\infty,b^1(t)]} - c_j \right) \phi_R \varphi_j \tilde{u} \;dxdt \\
		\mathcal{I}_2 &:= - \int_0^T \int_{-\infty}^\infty \tilde{u} \phi_R' \pa_x\varphi_j \;dxdt - \frac{1}{2} \int_0^T \int_{-\infty}^\infty \tilde{u} \phi_R'' \varphi_j \;dxdt \\
		&= \int_0^T \int_{-\infty}^\infty \left\{ \phi_R' \pa_x\tilde{u} + \frac{1}{2} \phi_R'' \tilde{u} \right\} \varphi_j \;dxdt .
	\end{aligned}
	\]
	It follows from Lebesgue's dominated convergence theorem that as $R\to\infty$,
	\[
		\mathcal{I}_1\to \int_0^T \int_{-\infty}^\infty \left( \1_{(-\infty,b^1(t)]} - c_j \right) \varphi_j \tilde{u} \;dxdt \quad\mbox{and}\quad \mathcal{I}_2\to 0,
	\]
	so passing to the limit $R\to\infty$ in \eqref{e:Weak_Soln_Ineq_wtih_I1_I2}, we have
	\begin{equation}\label{e:Weak_Soln_Ineq_after_R_to_infty}
		0 \leq \int_0^T \int_{-\infty}^\infty h \tilde{u} \;dxdt + \int_0^T \int_{-\infty}^\infty \left( \1_{(-\infty,b^1(t)]} - c_j \right) \varphi_j \tilde{u} \;dxdt.
	\end{equation}
	Passing to the limit $j\to\infty$ in \eqref{e:Weak_Soln_Ineq_after_R_to_infty}, by using Lebesgue's dominated convergence theorem again, we finally obtain
	\begin{equation}\label{e:Weak_Soln_Ineq}
	0 \leq \int_0^T \int_{-\infty}^\infty h \tilde{u} \;dxdt.
	\end{equation}
	Since Inequality~\eqref{e:Weak_Soln_Ineq} holds for any arbitrary smooth and non-negative function $h$ with compact support, we have $\tilde{u}\geq 0$. This completes the proof of Proposition~\ref{prop:Comparison_Principle}.
\end{proof}

\bibliographystyle{amsalpha}
\bibliography{IPT}
\end{document}